\newcommand{\rem}[2][noinline]{\ifthenelse{\boolean{DisplayREM}}{\todo[#1, color=gray!20!white]{\small \texttt{Rem}: #2}}{}}
\newcommand{\NN}{\mathcal{N}}
\newcommand{\VV}{\mathcal{V}}
\newcommand{\WW}{\mathcal{W}}
\newcommand{\CC}{\mathcal{C}}
\newcommand{\LL}{\mathcal{L}}
\newcommand{\LLt}{\mathcal{L}^{\textrm{\upshape\tiny t}}}
\newcommand{\LLj}{\mathcal{L}^{\textrm{\upshape\tiny j}}}
\newcommand{\LLtt}{\mathcal{L}^{\textrm{\upshape\tiny t}*}}
\newcommand{\LLjt}{\mathcal{L}^{\textrm{\upshape\tiny j}*}}
\newcommand{\R}{\mathbb{R}}
\newcommand{\E}{\mathbb{E}}
\newcommand{\crochet}[1] {\left\langle #1 \right\rangle}
\newcommand{\bigcrochet}[1] {\bigl\langle #1 \bigr\rangle}
\newcommand{\norm}   [1] {\left\Vert #1 \right\Vert}
\newcommand{\rmd}  {{\textrm{\upshape d}}}
\newcommand{\indic}{{\mathrm\mathbf1}}
\newcommand{\bv}{{\bar v}}
\renewcommand{\div}{\textrm{\upshape div}}
\newcommand{\Omegad}   {\Omega^{\textrm{\upshape\tiny d}}}
\newcommand{\bbracket}[1]{
  \bigl[\!\!\bigl[#1\bigr]\!\!\bigr]}
\newcommand{\mA}{\mathbf A}
\newcommand{\mB}{\mathbf B}
\newcommand{\mD}{\mathbf D}
\newcommand\defeq{\stackrel{\smash{\scriptscriptstyle\mathrm{def.}}}{=}}
\newtheorem{remark}{Remark}
\newcommand{\vertiii}[1]{{\left\vert\kern-0.25ex\left\vert\kern-0.25ex\left\vert #1 
    \right\vert\kern-0.25ex\right\vert\kern-0.25ex\right\vert}}
\title{Mean-field limit of interacting 2D nonlinear stochastic spiking neurons}
\author{Benjamin Aymard\thanks{Team MathNeuro, Inria Sophia-Antipolis M\'editerrann\'ee, 06902 Sophia-Antipolis Cedex, France 
  (\email{romain.veltz@inria.fr}).}
\and Fabien Campillo\footnotemark[1]
\and Romain Veltz\footnotemark[1]}
\begin{document}
\maketitle

\begin{abstract}
In this work, we propose a nonlinear stochastic model of a network of stochastic spiking neurons.
We heuristically derive the mean-field limit of this system.
We then design a Monte Carlo method for the simulation of the microscopic system, and a finite volume method (based on an upwind implicit scheme) for the mean-field model.
The finite volume method respects numerical versions of the two main properties of the mean-field model, conservation and positivity, leading to existence and uniqueness of a numerical solution.
As the size of the network tends to infinity, we numerically observe propagation of chaos and convergence from an individual description to a mean-field description.
Numerical evidences for the existence of a Hopf bifurcation (synonym of synchronised activity) for a sufficiently high value of connectivity, are provided.
\end{abstract}
\begin{keywords}
  Stochastic neural network, mean-field limit, Hopf bifurcation, GPU computing, Finite Volume Method
\end{keywords}

\begin{AMS}
  68Q25, 28Dxx, 65Cxx, 65Mxx, 65C05, 65C20
\end{AMS}

\section{Introduction}

Bridging the gap between microscopic and macroscopic descriptions of biological neural networks is one of the current challenges in neuroscience.
Microscopic description of individual neurons has been widely studied, from the historical model of Louis Lapicque \cite{Lapicque1907} to the Nobel winning model of Hodgkin and Huxley \cite{HodgkinHuxley1952}, reproducing reality in a very satisfying way, both qualitatively and quantitatively.
On the other end of the spectrum, electroencephalography  \cite{Berger1929} and functional magnetic resonance imaging \cite{fMRI1990} provide insight in brain activity and connectivity at a macroscopic level.
But how can we explain the macroscopic measurements from the microscopic description?
A direct approach consists of considering a large model coupling single neuron models in a finite size network description.
The complexity of this approach increases with the size of the system, leading to a high computational cost for simulations, and a difficult theoretical analysis.
An alternative approach is to use the mean-field approximation, where the individual description is homogeneous stemming from exchangeability of the particles.
An advantage of the mean-field description is the possibility to use the tools of PDEs theory for theoretical analysis.
Also, in this approach, the complexity remains constant, independent of the number of neurons in the system.

The family of two-dimensional (2D) nonlinear spiking neuron model \cite{gerstner_neuronal_2014, Touboul2008,touboul_dynamics_2008} is efficient at reproducing the majority of observed membrane potential behaviours.
It is a good compromise between the simple but limited integrate-and-fire model, and the rich but highly complex Hodgkin-Huxley model.
However, it has been shown that networks composed of such neurons, where the spiking process is deterministic, might lead to a blow up of the solution in finite time \cite{Delarue2015}.
The model introduced recently in \cite{DeMasi2015, Fournier2016}, where the spiking event is stochastic, elegantly circumvents this problem.
Additionally, stochastic spiking with a probability increasing with the membrane potential value is biologically relevant \cite{FiringVariability2011} and has been used \cite{Schwalger2017} to model cortical columns.
Yet, the neuron model they introduced in \cite{DeMasi2015} is only 1D, limiting the dynamics to integrator neurons without  bursting nor resonance for instance.

The use of 2D nonlinear spiking neurons in a network and the study of the mean-field approximation brings many difficulties which have been tackled only recently~\cite{diVolo2019}. The first difficulty is that spikes are modelled as finite time explosions rather than threshold crossing. Note that although the singular behaviour of \cite{Delarue2015} might disappear for this neuron model, we focus on the framework of \cite{DeMasi2015}. The second difficulty is that the rate function which governs the spiking mechanism is unbounded which renders the use of classical theory for piecewise deterministic Markov processes \cite{Davis1993} more difficult. Note that a related network with scalar neurons, \textit{e.g.} the quadratic IF neuron, is widely studied in the literature under the name of the Ott-Antonsen ansatz \cite{Ott2008,Pazo2014,Pazo2016}.

The study of similar noiseless case networks have been studied in the past \cite{nicola_nonsmooth_2016,nicola_bifurcations_2013} under the first order moment closure approximation which effectively makes the system one-dimensional. However, this approximation implies that one loses information regarding the adaptation variable. Nevertheless, the authors were able to compute bifurcation diagrams thanks to this approximation and to predict bursting activity at the network level. Recently, several approximations have been re-visited \cite{Augustin2017} with similar aims. Note that network bursts have also been predicted recently based on separation of timescales approximation \cite{fardet_understanding_2018}.

Please note that this mean-field approach for spiking neural networks has been extensively used in the past \cite{Brunel2000,renart_mean-field_2004,ostojic_synchronization_2009} and put on rigorous grounds in \cite{carrillo_classical_2013,Delarue2015}. The use of generalised integrate and fire neurons in mean-field models has been studied in \cite{Gerstner2014,DeMasi2015,Fournier2016}. The mean-field limits of networks of spiking neurons modelled by Hawkes processes has been studied by \cite{chevallier_microscopic_2015}. Several extensions have been provided namely by introducing space dependency \cite{duarte_hydrodynamic_2015} or dendritic compartments \cite{inglis_mean-field_2015,fournier_toy_2018}.

From the simulation point of view, finite size network dynamics might be efficiently computed using a Monte Carlo approach and taking advantage of graphics processing unit (GPU), whereas conservative mean-field may be simulated using finite volume methods (FVM).

The PDEs obtained as mean-field limits of networks of 2D nonlinear spiking neuron are challenging to simulate (see \cite{kamps_computational_2019}) and to study theoretically:
the velocity field can be explosive (one may think about adaptive exponential integrate and fire models \cite{BretteGerstner2005}),
the reset introduces a spatial singularity in the form of a Dirac  measure,
and the network activity includes a nonlocal term.
Qualitative properties of FVM are crucial in order to reliably approximate such equations (see \cite{marpeau_finite_2009} for a diffusive case), especially when the aim is to capture invariant measures.

In this article, we introduce a 2D nonlinear stochastic spiking neural network combining \cite{Touboul2008} and \cite{DeMasi2015}.
In  \cref{sec.model}, we present the model and heuristically derive its mean-field limit.
\Cref{sec.numerics} is devoted to numerical methods and their properties.
Finally, in \cref{sec.simulations} we present numerical simulations of the model.

\section{A detailed stochastic model and its mean-field approximation}
\label{sec.model}

\subsection{Detailed stochastic model}
\label{sec.stochastic.model}

We consider a population of $N$ identical neurons.
Let  $v_i(t) \in \mathbb{R}$ (resp. $w_i(t) \in \mathbb{R}$) denote the membrane potential (resp. the adaptation current) of neuron $i$ at time $t$.
The orders of magnitude are typically millisecond for the time, millivolt for membrane potential and pico Amp\`ere for the adaptation current.
The adaptation current is not directly linked to a biological quantity, but represents internal processes such as ion channel dynamics or propagation of depolarisation.

The model we consider is a piecewise deterministic Markov process (PDMP).
The flow is given by
\begin{equation}
\label{micro}
\begin{aligned}
  \dot v_i(t) 
  &= 
  \tilde\VV(v_i(t),w_i(t))\,,
  &&
  \textrm{with }
  &\tilde\VV(v,w)
  &\defeq F(v) - w  + I\,,
\\
  \dot w_i(t) 
  &= 
  \WW(v_i(t),w_i(t))\,, 
  &&
  \textrm{with }
  &\WW(v,w)
  &\defeq\frac{1}{\tau_w} (b \,v - w)
\end{aligned}
\end{equation}
where $\tau_w$ and $b$ are real parameters, and $I$ corresponds to the external stimuli (input current).
Several choices are possible for the nonlinearity $F$, corresponding to different classical models, such as
\[
\begin{aligned}
F_1(v) &= v(v  - a), a\in \mathbb{R} &&\mbox{               (Izhikevich model \cite{Izhikevich2004})},   \\
F_2(v) &= e^v - v  &&\mbox{             (AdEx model \cite{BretteGerstner2005})},\\
F_3(v) &= v^4 + {2\,a v}, a\in \mathbb{R} &&\mbox{          (quartic model \cite{Touboul2008})}.
\end{aligned}
\]

Coupled with \cref{micro} and independent of each other, each neuron $i$ spikes at a given rate $\lambda(v^i)> 0$ depending on the membrane potential, and imposes a jump transition given by
\begin{equation}
\begin{aligned}
  \bigl(v_i(t),w_i(t)\bigr) &= \bigl(\bar{v},w_{i}(t^-)+{\bar w}\bigr),\\
  \bigl(v_j(t),w_j(t)\bigr) &= \textstyle \bigl(v_j(t^-) + \frac{J}{N},w_j(t^-)\bigr) \mbox{ for }j \not= i\,.
\end{aligned}
\label{spike}
\end{equation}
where $\bar w>0$. Note that is is possible to adapt, at minor cost, the algorithms that follows to the case $\bar w<0$.

The first part of the transition describes the reset following the emission of a spike by an isolated neuron while the second part models the excitatory interaction between neurons. In short, when a neuron spikes, it increases the membrane potentials of the post-synaptic neurons by the amount $\frac JN$. We focus on the case $J>0$ although $J<0$ can be treated analogously.

We assume that the mechanisms acting on each neuron are independent and identically distributed at initial time according to $\mu_0(\rmd v,\rmd w)$; and that the spike events for every neuron $i$ and the initial condition of the system are mutually independent.

This stochastic approach was originally introduced in \cite{gerstner_neuronal_2014,DeMasi2015} in the framework of 1D linear spiking neurons.
Stochastic firing is biologically justified \cite{FiringVariability2011}, and avoids a mathematical problem appearing within networks using deterministic firing \cite{Delarue2015}.
In this work, we generalise this approach to the case of 2D nonlinear spiking neurons.

\subsection{mean-field approximation}

Let  $(v^N_{i}(t),w^N_{i}(t))_{1\leq i\leq N}$ denote the dynamic \cref{micro}--\cref{spike} described in the previous section. It can be rewritten as the following interacting particle system:
\begin{align}
\label{eq.pdmp}
\left\{
\begin{aligned}
  v^N_{i}(t)
  &=
  v^N_{i,0}
  + \int_{0}^t \tilde\VV(v^N_{i}(s),w^N_{i}(s))\,\rmd s 
\\
  &\qquad\qquad\qquad\quad
  +
  \int_{0}^t\int_{0}^\infty
  (\bar v - V^N_{i}(s^-)) \,\indic_{\{z\leq \lambda(v^N_{i}(s^-))\}}\,\NN^{i}(\rmd z,\rmd s)   
\\
  &\qquad\qquad\qquad\quad
  +
  \frac{J}{N}
  \sum_{j\neq i} \int_{0}^t\int_{0}^\infty \,\indic_{\{z\leq \lambda(v^N_{j}(s^-))\}}\, \NN^{j}(\rmd z,\rmd s)   \,,
\\ 
  w^N_{i}(t)
  &=
  w^N_{i,0}
  +
  \int_{0}^t \WW(v^N_{i}(s),w^N_{i}(s))\,\rmd s 
\\
  &\qquad\qquad\qquad\quad
  +
  \int_{0}^t\int_{0}^\infty \bar w \,\indic_{\{z\leq \lambda(v^N_{i}(s^-))\}} \,\NN^{i}(\rmd z,\rmd s)\,,
\end{aligned}
\right.
\end{align}
where 
$\NN^{i}$ are $N$ independent Poisson random measures with intensity measure $\rmd z\times \rmd s$ (Lebesgue measure on $\R^2_{+}$). We suppose that the initial conditions $(v^N_{i,0},w^N_{i,0})$ are independent with the same distribution $\mu_{0}$, for all $i$, and that they are also independent from the $\NN^{i}$'s. Particles in \eqref{eq.pdmp} interact only through the term:
\[
  \Psi^{N}\!(t)
  \defeq
   \frac{1}{N}
  \sum_{j\neq i} \int_{0}^t\int_{0}^\infty \,\indic_{\{z\leq \lambda(v^N_{j}(s^-))\}}\, \NN^{j}(\rmd z,\rmd s)\,.
\] 
Suppose that the interacting particle system \eqref{eq.pdmp}  features a propagation of chaos property \cite{sznitman1991a};  this will have to be demonstrated in a future study. This property, that we do not prove in the present work, roughly means that when $N$ is large, the particles tend to behave like independent particles with the same limit distribution. Hence, according to the law of large numbers, the (stochastic) interaction term $\Psi^N{t}$ converges to the following (deterministic) expression:
\[
  \Psi^N\!(t)
  \underset{\scriptscriptstyle N \mathrm{ large}}{\simeq}
  \Psi(t)
  \defeq
  \E \int_{0}^t 
         \int_{0}^\infty \,\indic_{\{z\leq \lambda(v^N_{1}(s^-))\}}\, \NN^{1}(\rmd z,\rmd s)
\]
as $\NN^{1}$ is a Poisson random measure with intensity measure $\rmd z\times \rmd s$, the limit interaction term is:
\[
  \Psi(t)
  =
  \E\int_{0}^t
      \int_{0}^\infty \,\indic_{\{z\leq \lambda(v^N_{1}(s^-))\}}\,\rmd z\,\rmd s
  =
  \int_{0}^t\psi(s) \, \rmd s
  \quad \textrm{ with }\psi(t)\defeq \E(\lambda(v(t))) \,.
\]
In other words, we have a mean-field limit, indeed the empirical distribution of the interacting particles:
\begin{align}
\label{eq.muN}
\mu^N (t,\rmd v,\rmd w)
&\defeq 
\frac{1}{N}\sum_{i=1}^N \delta_{(v^N_{i}(t),w^N_{i}(t))}(\rmd v,\rmd w)
\end{align}
converges to a (deterministic) distribution 
$\mu(t,\rmd v, \rmd w)$ which represents the distribution of a ``limit'' particle $(v(t),w(t))$ described as:
\begin{align}
\label{eq.McKean-Vlasov}
\left\{
\begin{aligned}
  v(t)
  &=
  v_{0}
  + \int_{0}^t \tilde\VV(v(s),w(s))\,\rmd s 
\\
  &\qquad\qquad\qquad
  +
  \int_{0}^t\int_{0}^\infty
  (\bar v - v(s^-)) \,\indic_{\{z\leq \lambda(v(s^-))\}}\,\NN(\rmd z,\rmd s)   
\\
  &\qquad\qquad\qquad
  +
  J
  \int_{0}^t\E(\lambda(v(s))) \, \rmd s   \,,
\\ 
  w(t)
  &=
  w_{0}
  +
  \int_{0}^t \WW(v(s),w(s))\,\rmd s 
\\
  &\qquad\qquad\qquad
  +
  \int_{0}^t\int_{0}^\infty \bar w \,\indic_{\{z\leq \lambda(v(s^-))\}} \,\NN(\rmd z,\rmd s)
\end{aligned}
\right.
\end{align}
where $(v_{0},w_{0}) \sim \mu_{0}$ and $\NN(\rmd z,\rmd s)$ is a random Poisson measure with intensity measure $\rmd z\times\rmd s$, $(v_{0},w_{0})$ and $\NN(\rmd z,\rmd s)$ are independent.
 
\subsection{Derivation of the nonlinear PDE}

The  PDMP  \cref{eq.McKean-Vlasov} is of McKean-Vlasov type as it is not simply an equation for $(v(t),w(t))$ \emph{per se} as the right hand side of \cref{eq.McKean-Vlasov} depends on $(v(t),w(t))$ and on its law through the deterministic current $\psi(t)=\E(\lambda(v(t)))$. The infinitesimal generator of this process is:
\[
  \LL_{\mu(t)} \varphi(v,w)
  =
  \LLt_{\mu(t)} \varphi(v,w)
  +
  \LLj \varphi(v,w)\,,
\]
with
\begin{align}
\label{eq.LLt}
   \LLt_{\mu} \varphi(v,w) 
   &\defeq
   \VV_{\mu}(v,w)\,\frac{\partial \varphi(v,w)}{\partial v}
   +\WW(v,w)\,\frac{\partial \varphi(v,w)}{\partial w}\,,
 \\
\label{eq.LLj}
   \LLj \varphi(v,w) 
   &\defeq
   \lambda(v) \iint_{\R^2} \bigl(\varphi(v',w')-\varphi(v,w)\bigr)\,\pi(v,w,\rmd v',\rmd w')
 \\
\nonumber
   &\qquad\qquad=
   \lambda(v) \bigl(\varphi(\bar v,\bar w+w)-\varphi(v,w)\bigr)\,,
\end{align}
and
\begin{align*}
  \VV_{\mu}(v,w)
  &\defeq
  \tilde\VV(v,w)+J\,\iint_{\R^2} \lambda(v')\,\mu(\rmd v',\rmd w')\,,
\\
  \pi(v,w,\rmd v',\rmd w')
  &
  \defeq
  \delta_{\bar v}(\rmd v')\,\delta_{\bar w+w}(\rmd w')  \,.
\end{align*}
The generator is defined for all test functions $\varphi:\R^2\mapsto \R$ in $\CC^1_{b}(\R^2)$ (continuously differentiable in $(v,w)$ and bounded). The operator $\LLt_{\mu}$ corresponds to the ``ODE'' part of the McKean-Vlasov PDMP   \cref{eq.McKean-Vlasov} and $\LLj$ to the ``pure jump'' part.

\medskip

The evolution of the distribution $\mu(t)$ is given by the Kolmogorov forward equation as a weak PDE:
\begin{align}  
\label{eq.transport.weak}
  \frac{\rmd}{\rmd t}\crochet{\mu(t),\varphi}
  &
  =\crochet{\mu(t),\LL_{\mu(t)}\varphi}\,,
  \quad\textrm{ for } t> 0\textrm{ and }
  \crochet{\mu(0),\varphi}=\crochet{\mu_{0},\varphi}
\end{align} 
for all test functions, where $\crochet{\mu,\varphi} \defeq \iint_{\R^2}\varphi(v,w)\,\mu(\rmd v, \rmd w)$ is the usual duality bracket. Note that the adjoint $\LL_{\mu}^*$ of $\LL_{\mu}$ is given by:
\begin{align*}  
  \crochet{\LL_{\mu}^*\mu, \varphi}
  &=
  \crochet{\mu,\LL_{\mu} \varphi}
\\
  &=
  \crochet{\mu,\VV_{\mu}\,\partial_v\varphi+\WW\,\partial_w\varphi}
  + \crochet{\delta_{\bar v}\otimes
      \textstyle\int_{\R} \lambda(v')\,\sigma_{\bar w}\mu(t,\rmd v',\cdot),\varphi}
  - \crochet{\mu,\lambda\, \varphi}
\\
  &=
  \crochet{\LLtt_{\mu}\mu+\LLjt\mu, \varphi}\,,
\end{align*} 
with $\sigma_{\bar w}\mu(t,\rmd v,\rmd w) \defeq \mu(t,\rmd v,\rmd w-{\bar w})$, so that \cref{eq.transport.weak} is a weak form of the following strong form of the forward Kolmogorov equation:
\begin{align}
\label{eq.kolmo.strong}
   \frac{\partial}{\partial t}\mu(t)
   =
   \LL^*_{\mu(t)}\,\mu(t)
   =
   \bigl(\LLtt_{\mu(t)}+\LLjt\bigr)\,\mu(t)
\end{align}  
which reads:
\begin{align}  
\label{macro}
  &\frac{\partial}{\partial t}  \mu(t,v,w)
  +
  \frac{\partial}{\partial v}\bigl(\mu(t,v,w)\,\VV_{\mu(t)}(v,w)\bigr)
  +
  \frac{\partial}{\partial w}\bigl(\mu(t,v,w)\,\WW(v,w)\bigr)
\\
\nonumber
  &\qquad\qquad
  =
  -\lambda(v)\,\mu(t,v,w)
  +
  \delta_{\bar v}(v) \int_{\R} \lambda(v')\,\sigma_{\bar w}\mu(t,v',w)\,\rmd v'
  \,,\quad\mu(0)=\mu_{0}
\end{align} 
where the shift operator $\sigma_{\bar w}$ is now
$\sigma_{\bar w}\mu(t,v,w) \defeq \mu(t,v,w-{\bar w})$.

\subsection{The mean-field as coupled transport equations }
The PDE \cref{macro} can be represented as a transport PDE with a jump condition at the interface:
\begin{align*} 
  \Gamma &\defeq \bigl\{(\bar v,w)\,;\, w\in\R  \bigr\}\,.
\end{align*} 
Indeed, let:
\begin{align*} 
  \Omega_1 &\defeq \bigl\{(v,w)\in\R^2, v<\bar v  \bigr\}\,,
  &
  \Omega_2 &\defeq \bigl\{(v,w)\in\R^2,  v>\bar v  \bigr\}\,.
\end{align*} 
In \cref{eq.transport.weak} consider the term:
\begin{align*} 
&\bigcrochet{\mu(t),\nabla \varphi \cdot F_{\mu(t)}}
=
\textstyle
\iint_{\R^2} \nabla \varphi \cdot \bigl(\mu(t)\, F_{\mu(t)}\bigr)
\\
&\quad=
\textstyle
\iint_{\Omega_{1}} \nabla \varphi \cdot \bigl(\mu(t) \,F_{\mu(t)}\bigr)
+
\iint_{\Omega_2} \nabla \varphi \cdot \bigl(\mu(t) \,F_{\mu(t)}\bigr)
\\
&\quad=
\textstyle
-\iint_{\Omega_{1}} \varphi \,\div \bigl(\mu(t) \,F_{\mu(t)}\bigr)
+ \int_{\R} \varphi(\bv,w)\,\mu(t,\bv^-,w)\,F_{\mu(t)}(\bv,w)\cdot
          \bigl(\begin{smallmatrix}1\\ 0\end{smallmatrix}\bigr)\,\rmd w
\\
&
\textstyle
\qquad -\iint_{\Omega_2}  \varphi \, \div \bigl(\mu(t)\, F_{\mu(t)}\bigr)
+ \int_{\R} \varphi(\bv,w)\,\mu(t,\bv^+,w)\,F_{\mu(t)}(\bv,w)\cdot
          \bigl(\begin{smallmatrix}-1\\ 0\end{smallmatrix}\bigr)\,\rmd w
\\
&\quad=
\textstyle
-\iint_{\R^2} \varphi \,\div \bigl(\mu(t) \,F_{\mu(t)}\bigr)
- \int_{\R} \varphi(\bv,w)\,\bbracket{\mu(t,\cdot,w)}\,\VV_{\mu(t)}(\bv,w)\,\rmd w
\end{align*}
where
\[ 
  \bbracket{\mu(t,\cdot,w)}  \defeq \mu(t,\bv^+,w)-\mu(t,\bv^-,w)
\]
denotes the jump of  $\mu(t,v,w)$ through the interface $\Gamma$.
Using test functions $\varphi$ with support included in $\Omega_{1}\cup\Omega_{2}$ we get:
\begin{align}
\label{strong.edp} 
    &\partial_{t} \mu(t)
  +
  \partial_{v}\bigl(\mu(t)\,\VV_{\mu(t)}\bigr)
  +
  \partial_{w}\bigl(\mu(t)\,\WW\bigr)
  =
  -\lambda \,\mu(t)\,,
\\
\nonumber
  &\qquad\qquad\qquad\textrm{with }\mu(t=0) = \mu_{0}\,,\hbox{ on }\Omega_i\,,\ i=1,2\,,
\end{align}
where $F_{\mu(t)} = \bigl(\VV_{\mu(t)}\,,\,\WW\bigr)$;
and with test functions $\varphi$ with support included in $\Gamma$ we get the interface condition:
\begin{align}
\label{strong.edp.interface}
   \VV_{\mu(t)}(\bv,w)\,\bbracket{\mu(t,\cdot,w)}
   =
   \int_{\R}\lambda(v')\,\sigma_{\bar w}\mu(t,v',w)\,\rmd v'\,,\ 
   \forall w\in\R\,.
\end{align}
Hence the PDE \cref{strong.edp} with the interface condition \cref{strong.edp.interface} is equivalent to \cref{macro}. Note that the PDE \cref{strong.edp} can be solved using the method of characteristics and that the boundary conditions are specified if and only if the characteristics enter the domain.


\section{Numerical methods}
\label{sec.numerics}

\subsection{Monte Carlo simulation of the particle system}

We now pre\-sent a Monte Carlo simulation procedure of the interacting particles system  introduced in Section \ref{sec.stochastic.model} and rewritten as \cref{eq.pdmp}. The Monte Carlo procedure is exact up to the time discretisation of the trajectories.

For the finite size network description, we take advantage of the all-to-all connectivity of the network to design a simple Monte Carlo algorithm.
Let  $v_i^n$ (resp. $w_i^n$) denote the membrane potential (resp. the adaptation current) of neuron $i$ at time $t^n$.
After initialisation on each neuron $i$:
\[
  (v_i^0,w_i^0) \sim \mu_0\,,
\]
the method consists of three steps.
First, we use an Euler step for the ODE integration to predict the deterministic evolution on each neuron $i$:
\begin{align*}
  v_i^* &= v_i + \Delta t\, \tilde\VV(v_i^n,w_i^n) \,,
  &
  w_i^* &= \textstyle w_i + \Delta t\, \WW(v_i^n,w_i^n)\,,
  &
  i=1,\dots,N\,.
\end{align*}
Second, we assume that the jump rate $\lambda$ does not evolve much in between the jump times.
Hence, neuron $i$ fires with probability $\lambda(v_i^*)\,\Delta t$:
\[
\begin{aligned}
  b_{i} &= \indic_{u_{i} < \lambda(v_i^*)\,\Delta t}\,, \textrm{ with } u_i \sim U(0,1)\,, \\
  v_i^{**} &= (1-b_{i})\,v_i^* + b_{i}\,\bar{v}\,,\\
  w_i^{n+1} &= w_i^* + b_{i}\,{\bar w}
\end{aligned}
\]
independently on each neuron $i$.
Third, the sum of all the spikes:
\[
  s = \frac{J}{N} \sum_{i=1}^N b_i
\]
is distributed to each neuron separately:
\[
v_i^{n+1} = v_i^{**} + s \,, \qquad i=1,\dots,N\,.
\]

This method is stable under certain conditions.
For the deterministic part of the PDMP, the classical Courant–Friedrichs–Lewy condition for forward Euler must hold.
For the probabilistic part, the time step has to be small enough so that, for each neuron, at most one jump occurs during the time step.
This depends on the parameters of the model.
In practice, we use the time step for the flow part and plot at the same time as the jump frequency, and then reduce the time step if necessary.

\medskip

The exact Monte Carlo methods that are developing more and more in the field of neuroscience \cite{Veltz2015,lemaire2018a}. Here, in the case of a simple Euler scheme for the flow approximation, the proposed ``almost'' exact Monte Carlo method is very effectively simulated on graphics processing units (GPU).

\subsection{Simulation of the mean-field PDE \cref{macro}}

As this PDE is defined on $\mathbb{R}^2$, a preliminary step is to consider a compact subset:
\[
  \Omega \defeq [v_{\min},v_{\max}] \times [w_{\min},w_{\max}]
\]
large enough to contain the initial condition and the solution during the evolution.

The numerical method for the simulation of the macroscopic description \cref{macro} is based on a splitting strategy \cite{Strang1968}:
we solve alternatively the transport part, using a conservative finite volume method (FVM, see \cite{Vinokur1989}), and the source part, using the approximate solution of an ODE system.
We define a regular $N_v\times N_w$ grid on $\Omega$:
\[
  \Omegad 
  \defeq 
  \bigl\{(v_i,w_j) \,,\, 1\leq i\leq N_{v}\,,\ 1\leq j\leq N_{w} \bigl\}
\]
with
\[
  (v_i,w_j) 
  \defeq 
  \bigl(v_{\min}+ (i-1)\,\Delta v\,,\,w_{\min} + (j-1)\,\Delta w\bigr)
\]
with $\Delta v>0$ and $\Delta w>0$ being the mesh size in the $v$ and $w$ direction respectively, and suppose $v_{\max}=v_{N_{v}}$,  $w_{\max}=w_{N_{w}}$.
Define also $v_{i\pm1/2}=v_{i}\pm\Delta v/2$, $w_{j\pm1/2}=v_{j}\pm\Delta w/2$, and:
\begin{align*}
  \bar{i} 
  &\defeq 
  \lfloor \textstyle \frac{{\bar v}-v_{\min}}{\Delta v} \rfloor\,,
&
  \bar{j}
  &\defeq
  \lfloor \textstyle \frac{{\bar w}-w_{\min}}{\Delta w} \rfloor\,.
\end{align*}
Cells are defined by
\begin{align*}
  \Omega_{i,j} 
  &= 
  \bigl[v_i-\Delta v/2,v_i+ \Delta v/2\bigr] 
  \times 
  \bigl[w_j-\Delta w/2,w_j+\Delta w/2\bigr]
\end{align*}
for $1\leq i\leq N_{v}$, $1\leq j\leq N_{w}$.

Let $\Delta t>0$ denote the time step, and $t^n=n\,\Delta t$.
We now propose a finite volume  approximation
$\mu^n=(\mu^n_{i,j})_{1\leq i\leq N_{v},1\leq j\leq N_{w}}$  of $\mu(t)$ at time $t^n$:
\begin{align*}
  \mu^n_{i,j} 
  &\simeq 
  \frac{1}{\Delta v\,\Delta w}
  \crochet{\mu(t^n),\indic_{\Omega_{i,j}}}\,,
\end{align*}
with initial condition $\mu^0_{i,j} \simeq \crochet{\mu_{0},\indic_{\Omega_{i,j}}}/(\Delta v\,\Delta w)$.

\medskip

For the update $\mu^n$ to $\mu^{n+1}$, we adopt a splitting technique for~\cref{macro} that will be detailed later. This technique alternates numerical approximations 
of the ``transport'' part:
\begin{align}
\label{eq.split.1}
   \partial_{t} \rho(t) = (\LLt_{\rho(t)})^* \rho(t)\,,
\end{align}
and of the ``jump'' part:
\begin{align}
\label{eq.split.2}
   \partial_{t} \rho(t) = (\LLj)^* \rho(t)\,.
\end{align}
We first detail the approximation of these two PDEs and then the splitting technique.

\subsection*{Discretisation of transport part}

We describe the update $\mu\to\mu'$ which corresponds to the 
numerical approximation of \cref{eq.split.1} on the interval $[0,\Delta t]$ with initial condition $\mu$ and final value at $\Delta t$ assigned to $\mu'$.
We adopt an upwind finite volume scheme 
on the structured mesh $\Omegad$ for the space and a semi-implicit Euler scheme for the time:
\begin{align}
\label{eq.split.discret.t}
  \frac{\mu'- \mu}{\Delta t} 
  +
  \mD(\mu)\,\mu'
  =
  0.
\end{align}
$\mD(\mu)\,\mu'$ is the approximation of divergence operator 
 $ \partial_{v}(\mu'\,\VV_{\mu})
  +
  \partial_{w}(\mu'\,\WW)
$
given by
\[
  \bigl(\mD(\mu) \, \mu'\bigr)_{i,j} 
  \defeq
  \frac{1}{\Delta v} \, \bigl(F_{i+1/2,j} - F_{i-1/2,j}\bigr) 
  + 
  \frac{1}{\Delta w} \, \bigl(G_{i,j+1/2} - G_{i,j-1/2}\bigr)
\]
based on the upwind numerical fluxes defined by\rem[]{On remonte les caracteristiques}
\begin{equation}
\begin{aligned}
  F_{i+1/2,j} 
  &\defeq
  \begin{cases}
    V_{i+1/2,j}(\mu)\,\mu'_{i,j}\,, &\mbox{ if } V_{i+1/2,j}(\mu)>0\\
    V_{i+1/2,j}(\mu)\,\mu'_{i+1,j}\,, &\mbox{ otherwise},
  \end{cases}
\\
  G_{i,j+1/2} 
  &\defeq
  \begin{cases}
    W_{i,j+1/2}\,\mu'_{i,j}\,, &\mbox{ if } W_{i,j+1/2}>0\\
    W_{i,j+1/2}\,\mu'_{i,j+1}\,, &\mbox{ otherwise},
  \end{cases}
\end{aligned}
\label{flux}
\end{equation}
(see  \cite{Vinokur1989})
and the approximated vector field, using numerical integration, by:
\[
\begin{aligned}
  V_{i,j} (\mu)
  &= 
  F(v_i) - w_j + I 
  + J \sum_{i'=1}^{N_v} \sum_{j'=1}^{N_w} 
  f(v_{i'})\,\mu_{i',j'} 
     \,\Delta v \,\Delta w\,,
\\
  W_{i,j} &= {(b\,v_i - w_j)}/{\tau_w}\,.
\end{aligned}
\]
Transition \eqref{eq.split.discret.t} reads:
\begin{equation}
  \mA_{\Delta t}(\mu) \, \mu' 
  = \mu
\label{FVM}
\end{equation}
with
\begin{equation*}
  \mA_{\Delta t}(\mu)
  \defeq 
  I + \Delta t \,\mD(\mu) \,.
\end{equation*}

Note that $\mA_{\Delta t}(\mu)$ and $\mD(\mu)$ can be considered as $(N_v\times N_w)^2$ square (sparse) matrices, but to avoid notational complexities, we will consider them as linear operators on the set of real functions defined on $\Omegad$. 

Concerning the boundary conditions, we impose null fluxes on the boundaries, leading to
\rem[inline]{with Boundary Conditions and Ghost Cells leveque2002a p. 129}
\begin{equation}
    \label{BCnumerics}
  \begin{cases}
    \forall n\geq 0\,,\ 1 \leq j \leq N_w\,:\   F_{1/2,j} = F_{N_v+1/2,j} = 0 \,,
    \\
    \forall n \geq 0\,,\ 1\leq i \leq N_v \,:\  G_{i,1/2} = G_{i,N_w+1/2} = 0\,. 
  \end{cases}
\end{equation}
This semi-implicit approach allows to avoid the restrictive Courant–Friedrichs–Lewy  condition that would appear due to the strong nonlinearity $F$ (possibly exponential).
Also, the matrix of the linear system to solve at each iteration is sparse, because the nonlocal term is treated explicitly.

\begin{remark}
	It is easy to see that $\mD$ is composed of a matrix diagonal by bands with 5 bands at $-N_v,-1,0,1,N_v$. This is a direct consequence of the formulas \cref{flux} based on a classical stencil scheme: each cell is (possibly)  connected to its 4 direct neighbours plus itself. The same holds for $\mD^*$.
\end{remark}

\subsection*{Discretisation of the jump part}

We describe the update $\mu\to\mu'$ that corresponds to the numerical approximation of \cref{eq.split.2} on the interval $[0,\Delta t]$ with initial condition $\mu$ and final value at $\Delta t$ assigned to $\mu'$.
Equation \cref{eq.split.2} is the forward Kolmogorov equation of a pure jump Markov process with infinitesimal generator $\LLj$. First, we restrict the latter operator to $\Omega$. The most evident way to achieve this is to consider:
$   \lambda(v)\,\bigl(\varphi(\bar v,(\bar w +w)\wedge w_{\max})-\varphi(v,w)\bigr),
$
which amounts to an accumulation point at $(\bar v,w_{\max})$. This operator is then 
approximated by a jump process on the grid $\Omegad$ by considering the generator:
\begin{align}
\label{eq.tilde.Lj}
   \tilde\LLj\phi(i,j)
   \defeq
   \lambda(v_{i})\,
   \Bigl(
     \phi\bigl(\bar i,(\bar j +j)\wedge N_{w} \bigr)
     -
     \phi\bigl(i,j\bigr)
   \Bigr)\,.
\end{align}
The associated forward Kolmogorov equation is:
\begin{align*}
  \dot \mu_{i',j'}(t) 
  &= 
  \sum_{i,j} \mu_{i,j}(t)\,\tilde\LLj_{(i,j),(i',j')}
\end{align*}
where $\tilde\LLj_{(i,j),(i',j')}=\tilde\LLj\phi(i,j)$ with $\phi(i,j)=\delta_{i i'}\,\delta_{j j'}$.
Straightforward calculations lead to the following system of ODEs:
\begin{equation}
\label{eq.kolmo2}
  \left\{
  \begin{array}{rll@{\hskip-0.5em}l}
  \dot \mu_{i',j'}(t) 
  &= 
  \displaystyle
  -\lambda(v_{i'})\,\mu_{i',j'}(t)
  && \hskip-2em\textrm{for }i'\neq \bar i\textrm{ and all }j'\,,
\\[0.7em]
  \dot \mu_{\bar i,j'}(t) 
  &= 
  \displaystyle
  -\lambda(v_{\bar i})\,\mu_{\bar i,j'}(t)
  &&  1\leq j'\leq \bar j\,,
\\[0.7em]
  \dot \mu_{\bar i,j'}(t) 
  &= 
  \displaystyle
  -\lambda(v_{\bar i})\,\mu_{\bar i,j'}(t)
  +\sum_{1\leq i\leq N_{v}}\lambda(v_{i})\,\mu_{i,j'-\bar j}(t)
  && \bar j<j'< N_{w}\,,
\\[0.7em]
  \dot \mu_{\bar i,N_{w}}(t) 
  &= 
  \displaystyle
  -\lambda(v_{\bar i})\,\mu_{\bar i,N_{w}}(t) 
  +
  \sum_{\substack{1\leq i\leq N_{v}\\ N_{w}-\bar j\leq j\leq N_{w}}}
      \lambda(v_{i})\, \mu_{i,j}(t)\,.
  \end{array}
  \right.
\end{equation}
Note that this last equation is:
\[
   \dot \mu_{\bar i,N_{w}}(t) 
  = 
  \sum_{\substack{N_{w}-\bar j\leq j\leq N_{w}\\ (i,j)\neq (\bar i,N_{w})}} 
      \lambda(v_{i})\, \mu_{i,j}(t)
\]
which indeed corresponds to the fact that $(\bar i,N_{w})$ is an accumulation point.

Hence, the update $\mu \to \mu'$ on interval $[0,\Delta t]$ consists of solving \eqref{eq.kolmo2} with initial condition $\mu$ and to set $\mu'=\mu(\Delta t)$.
System \eqref{eq.kolmo2} can be explicitly solved but not in a convenient way. As the previous step of the splitting method is of the first order, it is consistent to propose a simple first order approximation of \eqref{eq.kolmo2}.

Note that the solutions of the first two sets of equations in \eqref{eq.kolmo2} are  $\mu_{i',j'}(t)
  =
  e^{-\lambda(v_{i'})\,t}\,\mu_{i',j'}(0)$
for all $(i',j')$ not in $\{\bar i\}\times \{\bar j+1,\dots,N_{w}\}$. For the other components of \eqref{eq.kolmo2}, we choose to make an approximation which features the same time Euler scheme as \eqref{eq.kolmo2} and which respects both positivity and mass conservation properties, namely:
\begin{align}
\label{eq.split2}
   \mu' = \mB_{\Delta t}\, \mu
\end{align}
defined by:
\begin{equation}
\label{eq:B}
\left\{
\begin{array}{rlll}
  \mu'_{i',j'}
  &= 
  \displaystyle
  e^{-\lambda(v_{i'})\Delta t}\,\mu_{i',j'}\,,
  && \hskip-10em\textrm{for }i'\neq \bar i\textrm{ and all }j'\,,
\\[0.7em]
  \mu'_{\bar i',j'}
  &= 
  \displaystyle
  e^{-\lambda(v_{\bar i'})\Delta t}\,\mu_{\bar i',j'}\,,
  && \hskip-6em 1\leq j'\leq \bar j\,,
\\[0.7em]
  \mu'_{\bar i',j'}
  &= 
  \displaystyle
  e^{-\lambda(v_{\bar i})\Delta t}\,\mu_{\bar i,j'}
  +
  \sum_{1\leq i\leq N_{v}}
     (1-e^{-\lambda(v_{i})\Delta t})\,\mu_{i,j'-\bar j}\,,
  && \hskip-6em \bar j<j'< N_{w}\,,
\\[0.7em]
  \mu'_{\bar i,N_{w}}
  &= 
  \displaystyle
  e^{-\lambda(v_{\bar i})\Delta t}\mu_{\bar i,N_{w}} 
  + 
  \sum_{1\leq i\leq N_{v}}
     (1-e^{-\lambda(v_{i})\Delta t})\,\sum_{N_{w}-\bar j\leq j\leq N_{w}} \mu_{i,j}\,.
\end{array}
\right.
\end{equation}
Operator $\mB_{\Delta t}$ is first order accurate in time. Indeed, the first order of the Taylor expansion of $\mB_{\Delta t}\mu$ in $\Delta t$ corresponds to the Euler scheme for \cref{eq.kolmo2}.

\begin{remark}
	The case $\bar w<0$ can easily be adapted.
\end{remark}

\subsection*{Time step strategy}

For the iteration $\mu^n\rightarrow \mu^{n+1}$ we use a symmetric Strang splitting method of the second order \cite[p. 82]{glowinski2016a}:
\begin{align}
\label{eq.strang}
    \mu^{n+1} 
    = 
    \mB_{\Delta t/2}\,\mA_{\Delta t}(\mu^n)\,\mB_{\Delta t/2}\,  \mu^n
\end{align}
coupled with an adaptation of the classical Euler-Richardson extrapolation strategy for the control the time step size \cite{farago2009a,zlatev2017a} (see Algorithm \ref{algo.2}). 

\begin{algorithm}[H]
\begin{algorithmic}
  \STATE $\epsilon$ given tolerance parameter
  \STATE $\mu_{1/2} 
    \leftarrow 
    \mB_{\Delta t/4}\,\mA_{\Delta t/2}(\mu^n)\,\mB_{\Delta t/4} \, \mu^n$ 
  \STATE $\mu_1 
    \leftarrow 
    \mB_{\Delta t/4}\,\mA_{\Delta t/2}\,(\mu_{1/2})\mB_{\Delta t/4} \,\mu_{1/2}$ 
  \STATE  $e \leftarrow \Delta v \,\Delta w \,
            \| (\mu_{1/2} - \mu_1) \,v \|_{l^1(\Omegad)}$ (evaluation of the evolution of the solution)
  \IF{$e < \epsilon$}
     \STATE  $\mu^{n+1} \leftarrow \mu_1$
     \STATE  $t^{n+1} \leftarrow t^n + \Delta t$
  \ENDIF
  \STATE $\Delta t \leftarrow 0.9\,\sqrt{\epsilon/e} \,\Delta t$
\end{algorithmic}
\caption{Time step adaptation algorithm}
\label{algo.2}
\end{algorithm}

The choice of the indicator for the evaluation of the evolution of the solution has been made in order to accurately approximate the mean membrane potential $\crochet{\mu(t),\phi_{1}}$ with $\phi_{1}(v,w)=v$. It is indeed a good measure of error as the membrane potential of each individual neuron explodes in finite time. Another reasonable choice would be to control the mean firing rate $\crochet{\mu(t),\lambda}$. However, for problems with localised (in time) activity such as in Figure~\ref{fig:Hopf}~Bottom, the indicator would be small during small network spiking activity independently of the underlying dynamics, that is why we do not use this.

Note that in contrast to the classic Euler-Richardson algorithm, we do not  write:
  \[
  \mu^{n+1} = \mu^n + \Delta t \,\mD(\mu^n) \, \mu^n\,.
  \]
  This solution gives a second order accuracy for a fixed time step, but the explicit formulation would break the positivity property of our scheme.

\begin{remark}
	We did not prove the convergence of our approximate solution to the solution of the PDE \cref{macro} as $N_v,N_w\to\infty$. This requires at least to prove that the PDE \cref{macro} is well posed and to provide some properties regarding its dynamics (e.g. a priori bounds...). Nevertheless, we can readily see that our scheme above is incomplete. We need to set $v_{\max},w_{\max}$ and $v_{\min},w_{\min}$ such that our approximation of the original jump process by the absorbing one does not affect the dynamics too much. The accumulating point $(\bar i,w_{\max})$ (resp. $(\bar i,w_{\min})$) in the case $\bar w>0$ (resp. $\bar w<0$) should be compensated by a redistribution of the mass by the drift if one wants to avoid concentration of mass that is not a feature of PDE \cref{macro}. A simple way to achieve this is, for example, to chose $w_{\max}$ high enough so that $(\bar i,w_{\max})$ is above the w-nullcline and the v-nullcline. This way, the flow is downward to the left at $(\bar i,w_{\max})$ and any mass at this point will be re-injected into the dynamics. Then, we chose $v_{min}$ small enough, on the left of the $v$-nullcline, to ensure that the vector field is entrant in the domain. Finally, we chose $v_{max}$ large enough, on the right of the $v$-nullcline to capture the explosive behaviour.
\end{remark}

\subsection{Properties}

Define the mass of a discrete solution $\mu$ as
\[
  m(\mu) \defeq \sum_{j=1}^{N_w}\sum_{i=1}^{N_{v}} \mu_{i,j}\,.
\]
We have designed the numerical schemes so that $\mA_{\Delta t}(\mu^n)$ and $\mB_{\Delta t}$ are mass conservative. It follows that the general Algorithm~\ref{algo.2} is mass conservative as well. Let us now focus on proving the positivity of the algorithm. Note that the same property was proved for a general mesh in \cite{boyer_analysis_2012} using a different method. It should be straightforward to adapt our proof to a general (non-regular) mesh. The main idea of our proof is to note that the adjoint of the discrete divergence $\mD^*$ is diagonally dominant.

\begin{theorem}
\label{theo}
For $\Delta t>0$ and $\mu^0\geq 0$, we have the following properties:
	\begin{enumerate}
		\item there exists a unique solution $\nu$ to the equation:
		\[
		\mA_{\Delta t}(\mu^0)\, \nu = \mu
		\]
		with null fluxes on the boundaries \cref{BCnumerics},		
		\item if $\mu$ is non-negative, then so is $\nu$.
	\end{enumerate}
	
\end{theorem}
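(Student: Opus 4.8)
The plan is to reduce the statement to linear algebra for the fixed matrix $\mA\defeq\mA_{\Delta t}(\mu^0)=I+\Delta t\,\mD(\mu^0)$ (a square matrix of size $(N_v\times N_w)^2$, obtained by freezing the nonlocal current at $\mu^0$): I want to show $\mA$ is invertible and that $\mA^{-1}$ sends non-negative grid functions to non-negative grid functions. First I would read off the sign pattern of $\mD(\mu^0)$ from the upwind fluxes \cref{flux}: in $(\mD(\mu^0)\nu)_{i,j}$ the coefficient of $\nu_{i,j}$ is a non-negative combination of the interface velocities (the relevant positive and negative parts of $V_{i\pm1/2,j}(\mu^0)$ and $W_{i,j\pm1/2}$), hence is $\ge 0$, whereas the coefficients of the four neighbours $\nu_{i\pm1,j}$, $\nu_{i,j\pm1}$ are $\le 0$ and all other coefficients vanish (the five-band structure of the Remark). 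The null-flux conditions \cref{BCnumerics} merely suppress the boundary terms and do not spoil these signs. With $\Delta t>0$ this shows $\mA$ is a Z-matrix: off-diagonal entries $\le 0$, diagonal entries $\ge 1>0$.

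Next, summing $(\mD(\mu^0)\nu)_{i,j}$ over all cells, the flux differences telescope and, by \cref{BCnumerics}, the total vanishes; hence every column of $\mD(\mu^0)$ sums to $0$ — this is exactly the mass conservation of the scheme. Together with the sign pattern, the diagonal entry of $\mD(\mu^0)$ in a given column equals the sum of the moduli of that column's off-diagonal entries. Therefore $\mA^{*}=I+\Delta t\,\mD(\mu^0)^{*}$ is strictly row-diagonally dominant with positive diagonal: in each row the diagonal exceeds the sum of the moduli of the off-diagonal entries by exactly $1$. By Gershgorin's theorem $\mA^{*}$ is invertible, hence so is $\mA$; this proves existence and uniqueness of $\nu$, uniformly in $\Delta t>0$ (no CFL restriction).

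For the positivity part I would argue through the adjoint, since the dominance just obtained is columnwise (a consequence of mass conservation), not rowwise, so a discrete maximum principle does not apply to $\nu$ directly. Given $\phi\ge 0$ on $\Omegad$, let $\psi$ solve $\mA^{*}\psi=\phi$ and suppose $\psi$ attained a negative minimum at a cell $k$. Using that the off-diagonal entries of $\mA^{*}$ are $\le 0$, that $\psi_\ell\ge\psi_k$ for every $\ell$, and the zero-row-sum identity for $\mD(\mu^0)^{*}$ (equivalently, that the rows of $\mA^{*}$ sum to $1$), one gets $(\mA^{*}\psi)_k\le\psi_k<0$, contradicting $(\mA^{*}\psi)_k=\phi_k\ge 0$. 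Hence $\psi\ge 0$, i.e. $(\mA^{*})^{-1}$ is entrywise non-negative, and then for $\mu\ge 0$ and any $\phi\ge 0$ we get $\crochet{\nu,\phi}=\crochet{\mA^{-1}\mu,\phi}=\crochet{\mu,(\mA^{*})^{-1}\phi}\ge 0$, so $\nu\ge 0$. Equivalently: $\mA$ is a Z-matrix whose transpose is strictly diagonally dominant with positive diagonal, hence a nonsingular M-matrix, so $\mA^{-1}\ge 0$ and $\nu=\mA^{-1}\mu\ge 0$.

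The bookkeeping of the stencil is routine; the two points that need care — and the only places the argument could fail — are (i) verifying that the boundary cells governed by \cref{BCnumerics} retain the Z-matrix sign pattern together with the exact zero-column-sum identity, and (ii) noticing that, because mass conservation yields diagonal dominance of $\mD(\mu^0)^{*}$ rather than of $\mD(\mu^0)$, the non-negativity of $\nu$ must be deduced from that of $(\mA^{*})^{-1}$ by duality (equivalently, from the M-matrix property of $\mA$) instead of from a maximum principle applied directly to $\mA\nu=\mu$.
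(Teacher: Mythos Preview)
Your argument is correct and follows essentially the same route as the paper's proof: both identify the Z-matrix sign pattern of $\mD(\mu^0)$, pass to the adjoint, and use that mass conservation (zero column sums of $\mD$) makes $\mA^{*}=I+\Delta t\,\mD^{*}$ strictly row-diagonally dominant with positive diagonal, hence invertible with non-negative inverse. The only cosmetic difference is in the last step: the paper obtains $(\mA^{*})^{-1}\ge 0$ by scaling so that $P\defeq I-\mA^{*}\ge 0$ with $\|P\|_\infty<1$ and expanding the Neumann series $\sum_k P^k$, whereas you deduce it from a discrete maximum principle (equivalently, the M-matrix characterisation); both are standard and yield the same conclusion.
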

\begin{proof}
	
For simplicity, as we work at time fixed in this proof, we drop the dependency of $\mA$ and $\mD$ on $\mu^0$.
The idea of the proof is to show that $I+\Delta t\,\mD^*$ is diagonally strictly dominant hence invertible with positive diagonal and non-negative off-diagonal elements. To this end, we first have to identify the adjoint $\mD^*$ of $\mD$. For this, we consider the following quantity
\begin{align*}
 \crochet{\mu,\mD^*\phi}
 &=
 \crochet{\mD\mu,\phi}
 =
 \sum_{i,j}
    \phi_{i,j}\,
    \bigl[(F_{i+1/2,j}-F_{i-1/2,j})+(G_{i,j+1/2}-G_{i,j-1/2})\bigr].
\end{align*}
Using the boundary conditions \cref{BCnumerics}, we get:
\begin{equation*}
 \crochet{\mu,\mD^*\phi}
 =
  \sum_{j=1}^{N_w}\sum_{i=1}^{N_v-1}\left(\phi_{i,j}-\phi_{i+1,j}\right)F_{i+1/2,j}
  +
  \sum_{i=1}^{N_v}\sum_{j=1}^{N_w-1}\left(\phi_{i,j}-\phi_{i,j+1}\right)G_{i,j+1/2}\,.
\end{equation*}
Note that from \cref{flux}, for any $\psi\in \mathbb R^{N_v-1}$:
\begin{align*}
  &\sum_{i=1}^{N_v-1}\psi_i F_{i+1/2,j}
  =
  \sum_{i=1}^{N_v-1}
    \psi_i\,\bigl(V_{i+1/2,j}^+ \, \mu_{i,j} 
                     - V_{i+1/2,j}^- \, \mu_{i+1,j}\bigr)
\\
  &\qquad 
  =
  \mu_{1,j}\,V^+_{3/2,j}\,\psi_1
  -
  \mu_{N_v}\,V^-_{N_v-1/2,j}\,\psi_{N_v-1}
  +
  \sum_{i=2}^{N_v-1}
     \mu_{i,j} \,\bigl(V_{i+1/2,j}^+\,\psi_i-V_{i-1/2,j}^-\,\psi_{i-1}\bigr)
\end{align*}
hence:
\begin{multline}
\label{eq:adjoint_i}
  \sum_{j=1}^{N_w}\sum_{i=1}^{N_v-1}\left(\phi_{i,j}-\phi_{i+1,j}\right)\,F_{i+1/2,j}
\\
  =
  \sum_{j=1}^{N_w}
  \Biggl\{
  \sum_{i=2}^{N_v-1}
    \mu_{i,j}\,
    \Bigl[
       (V_{i+1/2,j}^++V_{i-1/2,j}^-)\,\phi_{i,j}
       - V_{i+1/2,j}^+\,\phi_{i+1,j}
       - V_{i-1/2,j}^-\,\phi_{i-1,j}
    \Bigr]
    \hspace{3em}
\\[-0.7em]
  +
  \mu_{1,j}\,V^+_{3/2,j}\,(\phi_{1,j}-\phi_{2,j})
  -
  \mu_{N_v,j}\,V^-_{N_v-1,j}\,(\phi_{N_v-1/2,j}-\phi_{N_v,j}) 
  \Biggr\}
  \,.
\end{multline}
Similarly:
\begin{multline}
\label{eq:adjoint_j}
  \sum_{i=1}^{N_v}
  \sum_{j=1}^{N_w-1}
  (\phi_{i,j}-\phi_{i,j+1})\,G_{i,j+1/2}
\\
  =
  \sum_{i=1}^{N_v}
  \Biggl\{
  \sum_{j=2}^{N_w-1}\mu_{i,j}
    \Bigl[
      (W_{i,j+1/2}^++W_{i,j-1/2}^-)\,\phi_{i,j}
       - W_{i,j+1/2}^+\,\phi_{i,j+1}
       - W_{i,j-1/2}^-\,\phi_{i,j-1}
    \Bigr]
    \hspace{2em}
\\[-0.7em]
  +
  \mu_{i,1}\, W^+_{i,3/2} \, (\phi_{i,1}-\phi_{i,2})
  -
  \mu_{i,N_w} \, W^-_{i,N_v-1} \, (\phi_{i,N_w-1/2}-\phi_{i,N_w}).
  \Biggr\}
\end{multline}
From \cref{eq:adjoint_i} and \cref{eq:adjoint_j}, the diagonal element of $\mD^*$ are non-negative, the off-diagonal elements are non-positive. Finally, the matrix $\mD^*$ is  diagonally dominant (not stric\-tly). Indeed, each term in  \cref{eq:adjoint_i}--\cref{eq:adjoint_j} satisfies this property and the set of matrices satisfying this property is obviously convex. 

Let us now conclude.
First $M=I+\Delta t\,\mD^*$ is diagonally strictly dominant hence invertible.
Second $M$ has a non-negative (resp. non-positive) diagonal (resp. off-diagonal), so up to a scaling, 
we can assume that $\max_{i}M_{ii}<1$, so that $P\defeq I-M$ is non-negative and $\norm{P}_{\infty}<1$. We can thus expand $M^{-1}=(I-P)^{-1}$ as an infinite sum $I+P+P^2+\cdots$ which is non-negative too. The proof is complete.
\end{proof}

\medskip

\begin{lemma}
  $\mB_{\Delta t}$ is non-negative in the sense that for all non-negative $\mu$, $\mB_{\Delta t} \, \mu$ is also non-negative.
\end{lemma}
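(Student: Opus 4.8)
The plan is to read off the action of $\mB_{\Delta t}$ directly from its defining formulas \cref{eq:B} and check that each output component $\mu'_{i,j}$ is a non-negative linear combination of the input components $\mu_{i',j'}$. The key observation is that the operator decomposes, row by row, into two manifestly non-negative operations: a multiplication by the survival factors $e^{-\lambda(v_{i})\Delta t}\in(0,1]$, and a redistribution of the "jumped" mass using the weights $1-e^{-\lambda(v_{i})\Delta t}\in[0,1)$. Since $\lambda(v_i)>0$ for every grid node and $\Delta t>0$, all these coefficients lie in $[0,1]$, hence are non-negative.

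Concretely, I would argue as follows. For $i'\neq\bar i$ (and all $j'$), and for $i'=\bar i$ with $1\leq j'\leq\bar j$, the formula gives $\mu'_{i',j'}=e^{-\lambda(v_{i'})\Delta t}\,\mu_{i',j'}$, which is non-negative whenever $\mu\geq 0$ because the prefactor is positive. For $i'=\bar i$ and $\bar j<j'<N_w$, we have $\mu'_{\bar i,j'}=e^{-\lambda(v_{\bar i})\Delta t}\,\mu_{\bar i,j'}+\sum_{i=1}^{N_v}(1-e^{-\lambda(v_i)\Delta t})\,\mu_{i,j'-\bar j}$, a sum of non-negative terms since each coefficient $e^{-\lambda(v_{\bar i})\Delta t}$ and $1-e^{-\lambda(v_i)\Delta t}$ is non-negative and each $\mu_{i,j'-\bar j}\geq 0$ (the index $j'-\bar j$ is between $1$ and $N_w$, so it refers to a genuine cell). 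Finally, for the accumulation point $\mu'_{\bar i,N_w}=e^{-\lambda(v_{\bar i})\Delta t}\mu_{\bar i,N_w}+\sum_{i=1}^{N_v}(1-e^{-\lambda(v_i)\Delta t})\sum_{N_w-\bar j\leq j\leq N_w}\mu_{i,j}$, which is again a non-negative combination of non-negative entries. Hence $\mB_{\Delta t}\mu\geq 0$, componentwise.

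An alternative, slightly more conceptual, phrasing would be to observe that \cref{eq:B} exhibits $\mB_{\Delta t}$ as a matrix all of whose entries are of the form $e^{-\lambda(v_i)\Delta t}$, $1-e^{-\lambda(v_i)\Delta t}$, or $0$ — in particular all entries lie in $[0,1]$ — so $\mB_{\Delta t}$ maps the non-negative cone into itself. (One could also remark that $\mB_{\Delta t}$ is in fact column-stochastic, which re-proves mass conservation, but that is not needed here.) There is essentially no obstacle: the only things to be careful about are that every index appearing on the right-hand sides of \cref{eq:B} indeed points to a valid cell of $\Omegad$ — which holds because $\bar j\geq 0$ forces $1\leq j'-\bar j\leq N_w$ in the relevant range — and that $\lambda>0$ everywhere so the exponentials and their complements are genuinely in $[0,1]$ rather than needing a sign discussion. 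I would write the proof in three or four short lines, one per case of \cref{eq:B}, and conclude.

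\begin{proof}
We read the claim off the definition \cref{eq:B} of $\mB_{\Delta t}$. Since $\lambda(v_i)>0$ for all $i$ and $\Delta t>0$, every coefficient $e^{-\lambda(v_i)\Delta t}$ and every coefficient $1-e^{-\lambda(v_i)\Delta t}$ lies in $[0,1]$, hence is non-negative. Let $\mu\geq 0$. For $i'\neq\bar i$ and all $j'$, and for $i'=\bar i$ with $1\leq j'\leq\bar j$, the first two lines of \cref{eq:B} give $\mu'_{i',j'}=e^{-\lambda(v_{i'})\Delta t}\mu_{i',j'}\geq 0$. For $i'=\bar i$ and $\bar j<j'<N_w$, the index $j'-\bar j$ satisfies $1\leq j'-\bar j\leq N_w$, so the third line of \cref{eq:B} expresses $\mu'_{\bar i,j'}$ as a sum of products of non-negative coefficients with non-negative entries $\mu_{\bar i,j'}$ and $\mu_{i,j'-\bar j}$; hence $\mu'_{\bar i,j'}\geq 0$. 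Likewise the last line of \cref{eq:B} expresses $\mu'_{\bar i,N_w}$ as a non-negative combination of the non-negative entries $\mu_{\bar i,N_w}$ and $\mu_{i,j}$ with $N_w-\bar j\leq j\leq N_w$. Therefore $\mB_{\Delta t}\mu\geq 0$ componentwise, which is the claim.
\end{proof}
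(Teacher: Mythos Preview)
Your proof is correct and follows exactly the paper's approach: the paper's own proof simply states that from the expression \cref{eq:B}, non-negativity is a consequence of $\lambda\,\Delta t$ being non-negative. You have merely unpacked this one-line observation into a case-by-case verification, which is the same argument with more detail.
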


\begin{proof}
  From the expression \cref{eq:B} of $\mB_{\Delta t}$, this is a consequence of $\lambda\, \Delta t$ being non-negative.
\end{proof}

\medskip

\begin{proposition}
  The full numerical scheme based on the splitting strategy consisting of formulas \cref{FVM} and \cref{eq:B} is non-negative. This is the discrete version of the fact that the solution of \cref{eq.transport.weak} is a positive measure.
\end{proposition}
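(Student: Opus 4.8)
The plan is to deduce the statement directly from \cref{theo} and the preceding lemma by a composition-and-induction argument; no new estimate is required. The key observation is that one full time step of the scheme, as given by the Strang splitting \cref{eq.strang}, is a composition of three maps each already known to preserve non-negativity: the half jump step $\mB_{\Delta t/2}$ (a particular instance of $\mB_{\Delta t}$, hence non-negative by the lemma), the transport solve $\mu\mapsto\mu'$ defined by the linear system \cref{FVM} together with the null-flux boundary conditions \cref{BCnumerics} (well defined and non-negative by \cref{theo}, \emph{provided} the measure that parametrizes $\mA_{\Delta t}$ is non-negative), and the half jump step $\mB_{\Delta t/2}$ again. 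I would begin the proof by making the notation precise, reading $\mA_{\Delta t}(\mu^n)$ in \cref{eq.strang} as the solution operator of \cref{FVM}, i.e. $\mu^{n+1}=\mB_{\Delta t/2}\,\mA_{\Delta t}(\mu^n)^{-1}\,\mB_{\Delta t/2}\,\mu^n$, whose invertibility is exactly \cref{theo}(1).

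Then I would argue by induction on $n$. The base case is the hypothesis $\mu^0\ge 0$. For the inductive step, assume $\mu^n\ge 0$. The lemma gives $\mB_{\Delta t/2}\,\mu^n\ge 0$; since the parameter $\mu^n$ is non-negative, the hypotheses of \cref{theo} are met, so $\mA_{\Delta t}(\mu^n)^{-1}\bigl(\mB_{\Delta t/2}\,\mu^n\bigr)\ge 0$; a final application of the lemma preserves non-negativity, whence $\mu^{n+1}\ge 0$. This settles the fixed-step scheme \cref{eq.strang}. For the adaptive variant of Algorithm~\ref{algo.2} the reasoning is identical step by step: from $\mu^n\ge 0$ one obtains $\mu_{1/2}=\mB_{\Delta t/4}\,\mA_{\Delta t/2}(\mu^n)^{-1}\,\mB_{\Delta t/4}\,\mu^n\ge 0$ exactly as above (invoking \cref{theo} with the non-negative parameter $\mu^n$); $\mu_{1/2}$ being now non-negative, \cref{theo} applies once more with parameter $\mu_{1/2}$, giving $\mu_1=\mB_{\Delta t/4}\,\mA_{\Delta t/2}(\mu_{1/2})^{-1}\,\mB_{\Delta t/4}\,\mu_{1/2}\ge 0$, and the algorithm sets $\mu^{n+1}\leftarrow\mu_1$. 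Combined with the mass conservation of $\mA_{\Delta t}$ and $\mB_{\Delta t}$ recalled just before the theorem, this shows that every iterate $\mu^n$ is a non-negative discrete measure with the same total mass as $\mu^0$, which is the discrete counterpart of the fact that the solution of \cref{eq.transport.weak} remains a (probability) measure.

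There is essentially no hard step, as the substantive work has been done in \cref{theo} and the lemma; the induction merely threads them together. The single point worth flagging explicitly is that each transport solve freezes the nonlocal drift coefficient $\VV_{\mu}$ at the current iterate ($\mu^n$, resp. $\mu_{1/2}$), and \cref{theo} requires precisely that frozen measure to be non-negative; this is supplied at every stage by the induction hypothesis (resp. by the non-negativity of the first half-step), so the chain of implications never breaks, and the proof is complete.
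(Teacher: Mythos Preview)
Your proof is correct and follows exactly the route the paper intends: the paper's own proof is the single sentence ``This result is a consequence of the previous results on non-negativity of $\mA_{\Delta t}(\mu)$ and $\mB_{\Delta t}$,'' and your induction-and-composition argument is precisely the unpacking of that sentence. One small remark: the care you take to ensure the frozen parameter $\mu^n$ (resp.\ $\mu_{1/2}$) is non-negative before invoking \cref{theo} is faithful to the theorem's stated hypothesis, but if you inspect the proof of \cref{theo} you will see that the sign of the parameter never enters (the upwind fluxes use $V^\pm$, which are non-negative regardless), so this point is harmless over-caution rather than a genuine logical dependency.
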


\begin{proof}
This result is a consequence of the previous results on non-negativity of $\mA_{\Delta t}(\mu)$ and $\mB_{\Delta t}$.
\end{proof}

\section{Numerical simulations}
\label{sec.simulations}

Let us define useful statistical quantifiers in order to analyse the results,
mean membrane potential for both descriptions:
\[
  V_N = \frac{1}{N}\sum_i v_i\,, 
  \quad 
  V_{\infty} = \sum_{i,j}v_i\,\mu_{i,j}.
\]

The implementation is done in Julia language. For the Algorithm~\ref{algo.2}, the linear system associated to $\mA_{\Delta t}(\mu^n)$ is encoded in a sparse matrix at each iteration and a general linear solver is then called.
GPU simulations of the finite size network were carried on a Nvidia Tesla K80 card. In all simulations, we used a Mersenne twister for the generation of random numbers.

\begin{table}[h!]
\begin{center}
  \begin{tabular}{ | c | c | c | c |}
    \hline
    Parameter/function & CV test & Invariant distributions & Hopf test \\
    \hline
    $I(t)$ & $2$ & $-2,1$ & $0$\\
    \hline
    $\tau_w$ & $1$ & $2$ & $13$\\
    $b$ &$1$  & $1,0.05$ & $0.011$\\
    $\bar{v}$ & $1.0$ & $1.8$ & $-1.5$\\
    ${\bar w}$ & $1.5$ & $5.5, 1.5$ & $1.5$\\
    \hline
    $J$ & $3.1$ & $0$ & $5$\\
    \hline
    $\lambda(v)$ & $0.1 + e^{(v-1)}$ & $e^v$ & $e^v$\\
    $F(v)$ & $e^v - 5v$ & $e^v-v$ & $e^v-v$\\
    \hline
  \end{tabular}
  \caption{Model parameters}
  \label{modelParam}
\end{center}
\end{table}

\subsection{Convergence and propagation of chaos}

In this paragraph, we study the behaviour of the finite size network \cref{micro} as the number $N$ of neurons is increased.
We use the default parameters (see \cref{modelParam}) and we start from a Gaussian distribution:
\begin{equation}
  \mu_0(\rmd v,\rmd w) 
  = 
  \frac{1}{2 \,\pi\, \sigma_1\, \sigma_2} 
  \exp\Bigl(
     -\frac{(v-\mu_1)^2}{2\,\sigma_1} - \frac{(w-\mu_2)^2}{2\,\sigma_2}
  \Bigr)
  \,\rmd v\,\rmd w
  \label{IC}
\end{equation}
with $\sigma_1 =\sigma_2 = 1$, $\mu_1 = -1.3$, $\mu_2 = 2.28$.

\begin{figure}[htbp!]
  \begin{center}
    \includegraphics[width=0.49\textwidth]{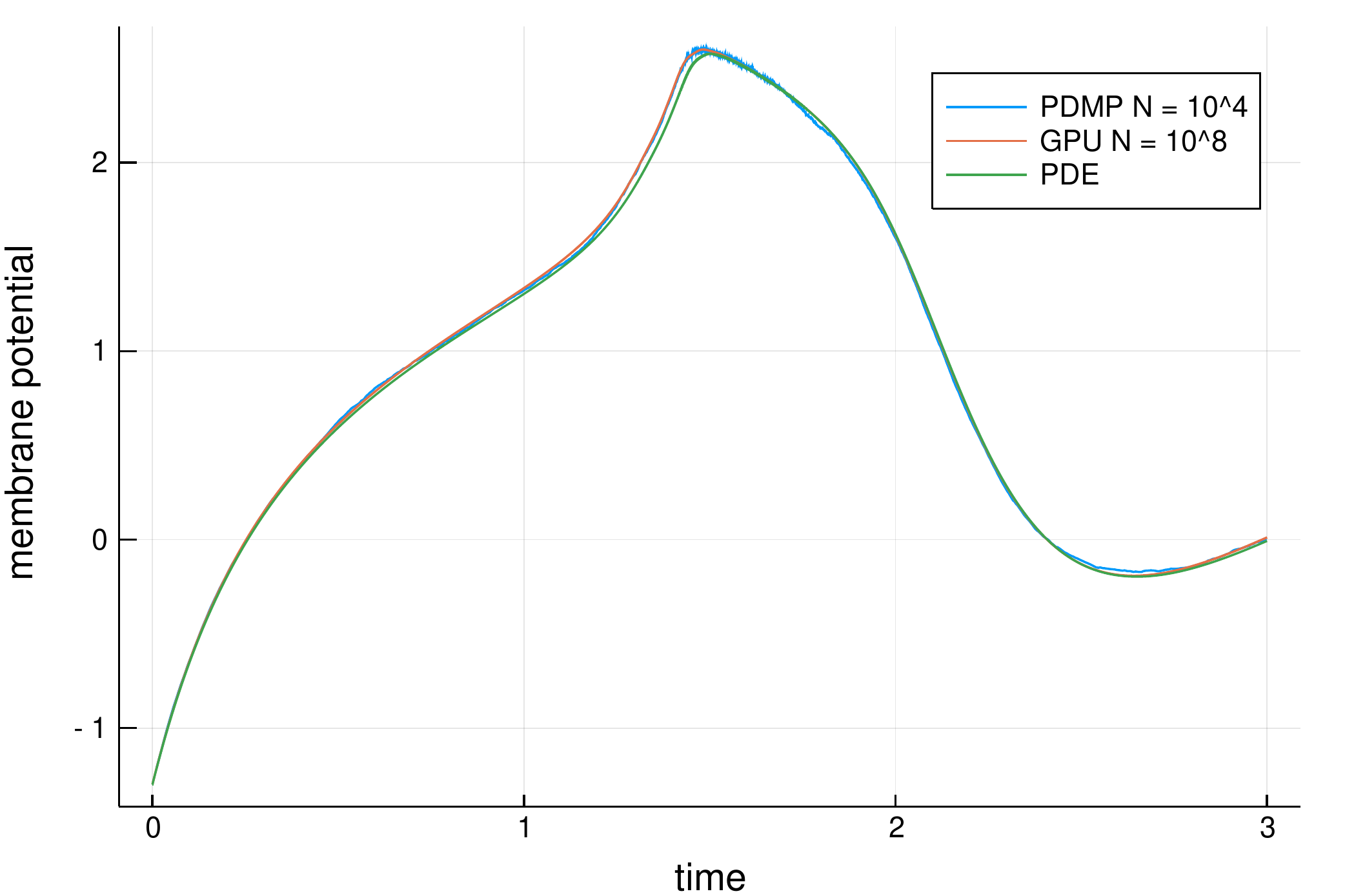}
    \\
    \includegraphics[width=0.49\textwidth]{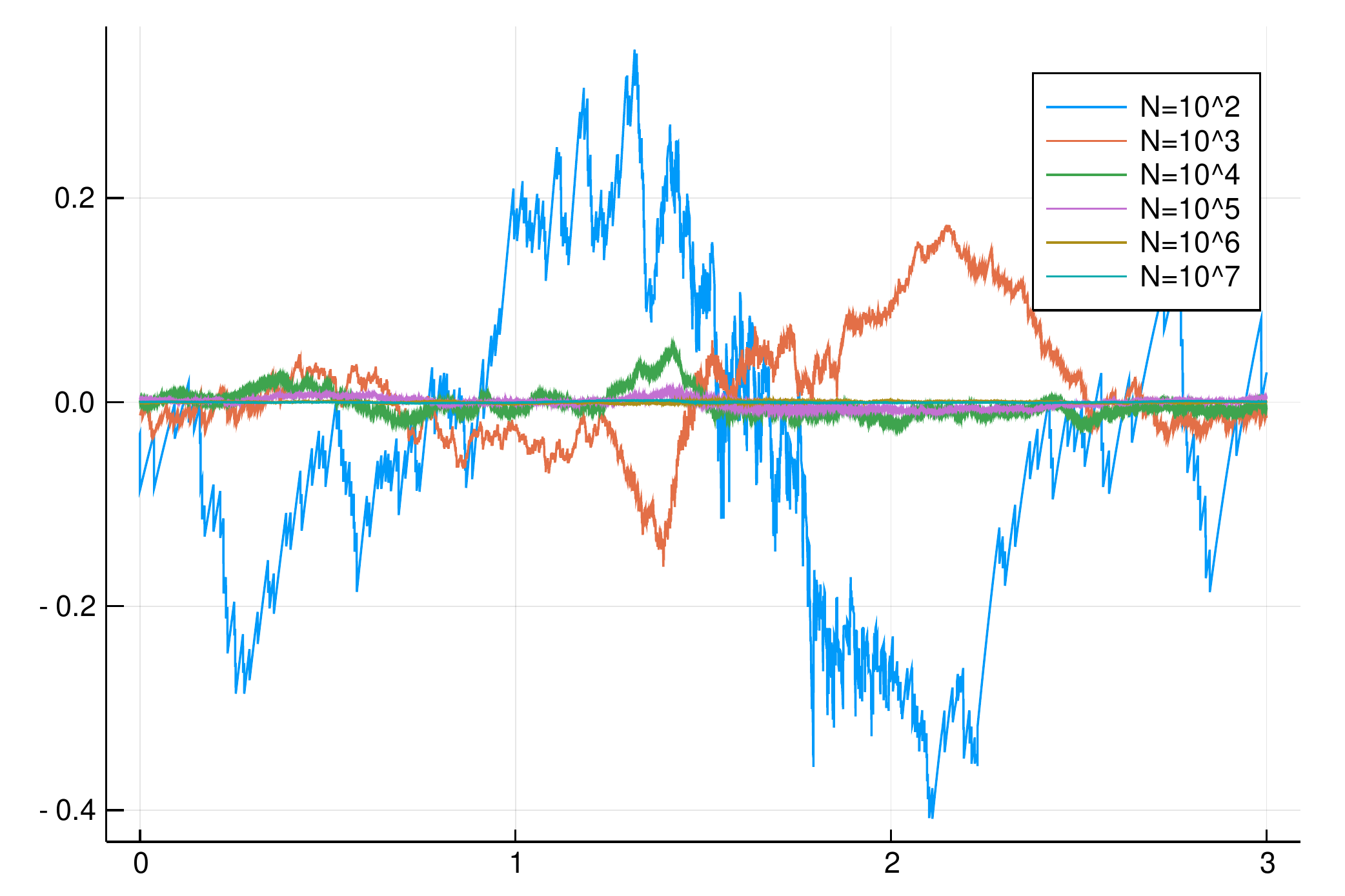}
    \includegraphics[width=0.49\textwidth]{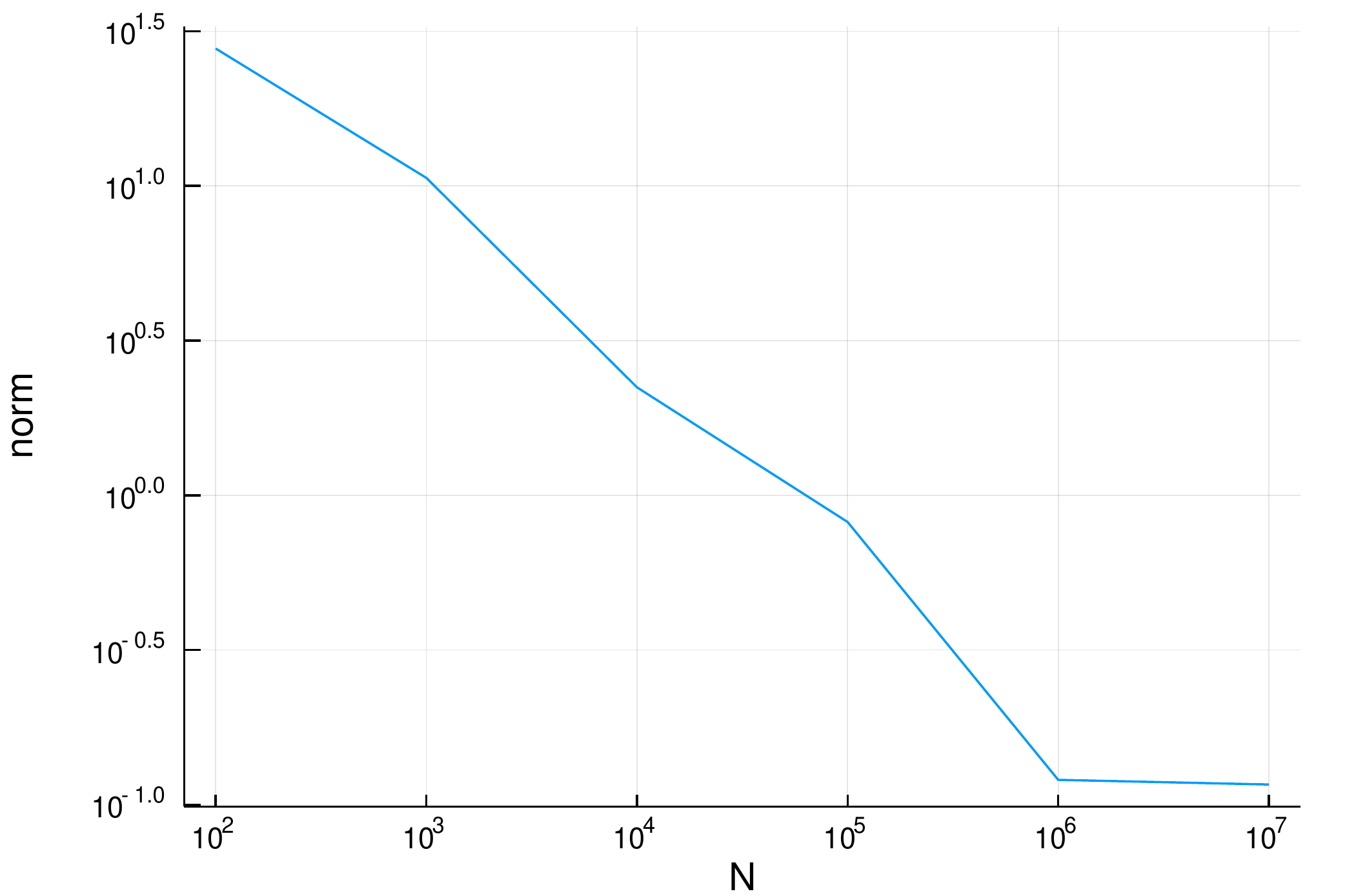}
    \\
    \includegraphics[width=0.49\textwidth]{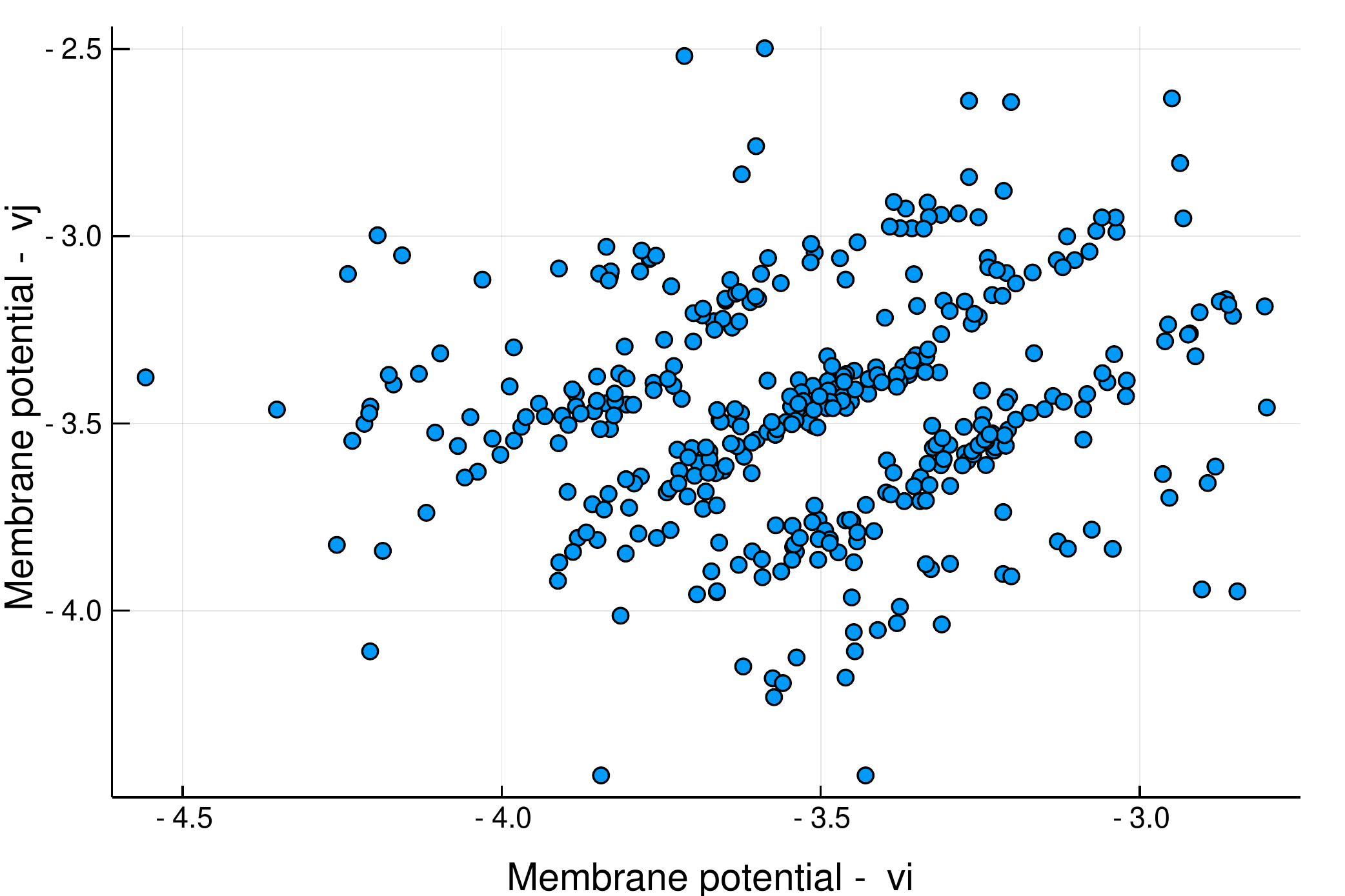}
    \includegraphics[width=0.49\textwidth]{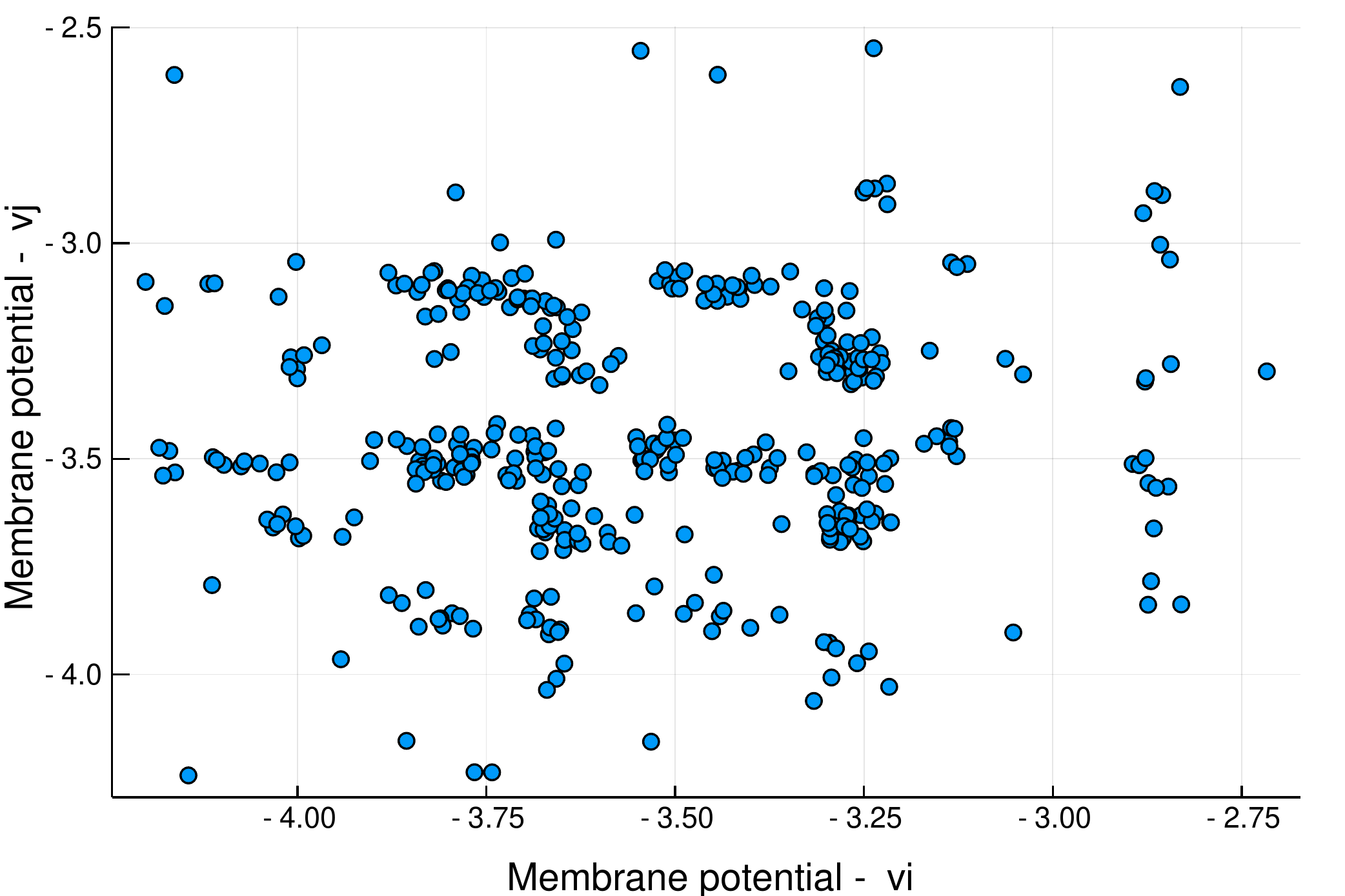}
  \end{center}
  \caption{Top: convergence test, comparison of the approximation obtained using three different methods (PDMP$^2$, PDMP$^1$ and PDE).
    Middle: difference of mean membrane potential as function of time and error as function of $N$ (reference solution $N=10^8$). Computed using the  Monte Carlo method.
    Bottom: quantification of propagation of chaos,
    particle correlation as a function of $N$,
    with each realisation as a simulation of \cref{micro} until a fixed final time corresponding to a stationary distribution.
  }
  \label{CVtest}
\end{figure}

On \cref{CVtest} (top) we present the results of a convergence test.
We plot the means membrane potential $V_N,V_\infty$ computed using two distinct approaches, microscopic and macroscopic.
For the microscopic approach, we have used two different methods. 
The label PDMP$^1$ refers to the method developed in this article
whereas the label PDMP$^2$ refers to the Julia library \href{https://github.com/rveltz/PiecewiseDeterministicMarkovProcesses.jl}{PiecewiseDeterministicMarkovProcesses.jl}, based on a stiff solver for the flow and a specific algorithm. This second method, based on \cite{veltz_new_2015}, is almost exact in that the errors come from the numerical flow in between jumps and the random generator. However, PDMP$^2$ cannot be used for a network size larger than $\sim10^4$ and that is why we use it as a reference for PDMP$^1$.
The three curves are in very good agreement.
We note that the convergence is linear in $N$ towards the mean-field approximation, and that a plateau is reached for $N=10^6$ (machine zero is reached).

We say that a $N$-neurons system, such as \cref{micro}, propagates chaos if the particles that compose the system become independent as the total number tends to infinity.
This concept was originally introduced in \cite{Kac1956} in the framework of kinetic theory.
In order to numerically support this property, we adapt the algorithm developed in \cite{Carlen2011}.
For each size of network $N$, we consider $M$ realisations.
Each realisation consists of simulating the system \cref{micro} until a final time $T$, long enough such that the system reaches a steady state.
Then, we randomly pick (uniform distribution) two neurons, let's say neuron $i$ and $j$, and we keep them.
At the end, we plot the 2D histogram in order to analyse the correlations.
On \cref{CVtest}  (bottom) we present the results of this test, with $N = 10^4:10^6$ and $M=400$.
A correlation is clearly visible for the case $N=10^4$, and seems to vanish as $N$ grows, as the cases $N=10^5$ seem to indicate.
This would be a numerical argument in favour of propagation of chaos in the case of this model.

\subsection{Equilibrium and bifurcation}

The invariant distributions $\mu^{\textrm{\tiny inv}}$ of \eqref{macro} are invariant distribution(s) of an isolated neuron, \textit{e.g.} for $J=0$, but for the current $I+J\crochet{\mu^{\textrm{\tiny inv}},\lambda}$. We thus have to look for the invariant distributions of an isolated neuron in order to study their existence in the nonlinear case. Two examples are shown in Figure~\ref{fig:inv-dist}~Left, in the case of two and no  equilibria for the underlying vector field. These results are obtained by using the Algorithm~\ref{algo.2} for very long times. It appears that the implementation respects the properties of positivity and conservation of mass. In the first case, one equilibrium is an attracting focus to which the dynamics are attracted. The isolated neuron is a PDMP with embedded jump chain $(\bar v,w_n)_n$ being a Markov chain. In the Right column, we compare the invariant distribution of the embedded chain with the (renormalised) quantity $\int_{\R}\lambda(v) \,\sigma_{\bar w} \,\mu^{\textrm{\tiny inv}}(\rmd v,\cdot)$. In the second case, the density presents several peaks on the reset line below the $v$ nullcline. This represents several consecutive spikes before a quiescent period during which the dynamics re-accumulate under the $v$-nullcline: this is typically a \textit{bursting} behaviour.
 
Based on extensive simulations, we make the following conjecture in the case $J=0$. 

\textit{The isolated neuron with rate function $\lambda=\exp$ and $F(v) = e^v-v$ has a unique invariant distribution to which the (linear) dynamics converge exponentially fast. Furthermore, this invariant distribution has a density with respect to the Lebesgue measure.
}

It seems that the conjecture should hold for $\lambda$ positive increasing and $F$ such that $\lambda/F$ is not integrable\footnote{to ensure that the network spikes at least once} at $v=+\infty$, but we don't have enough data to support this.

\begin{figure}[ht!]
		\includegraphics[width=0.47\textwidth]{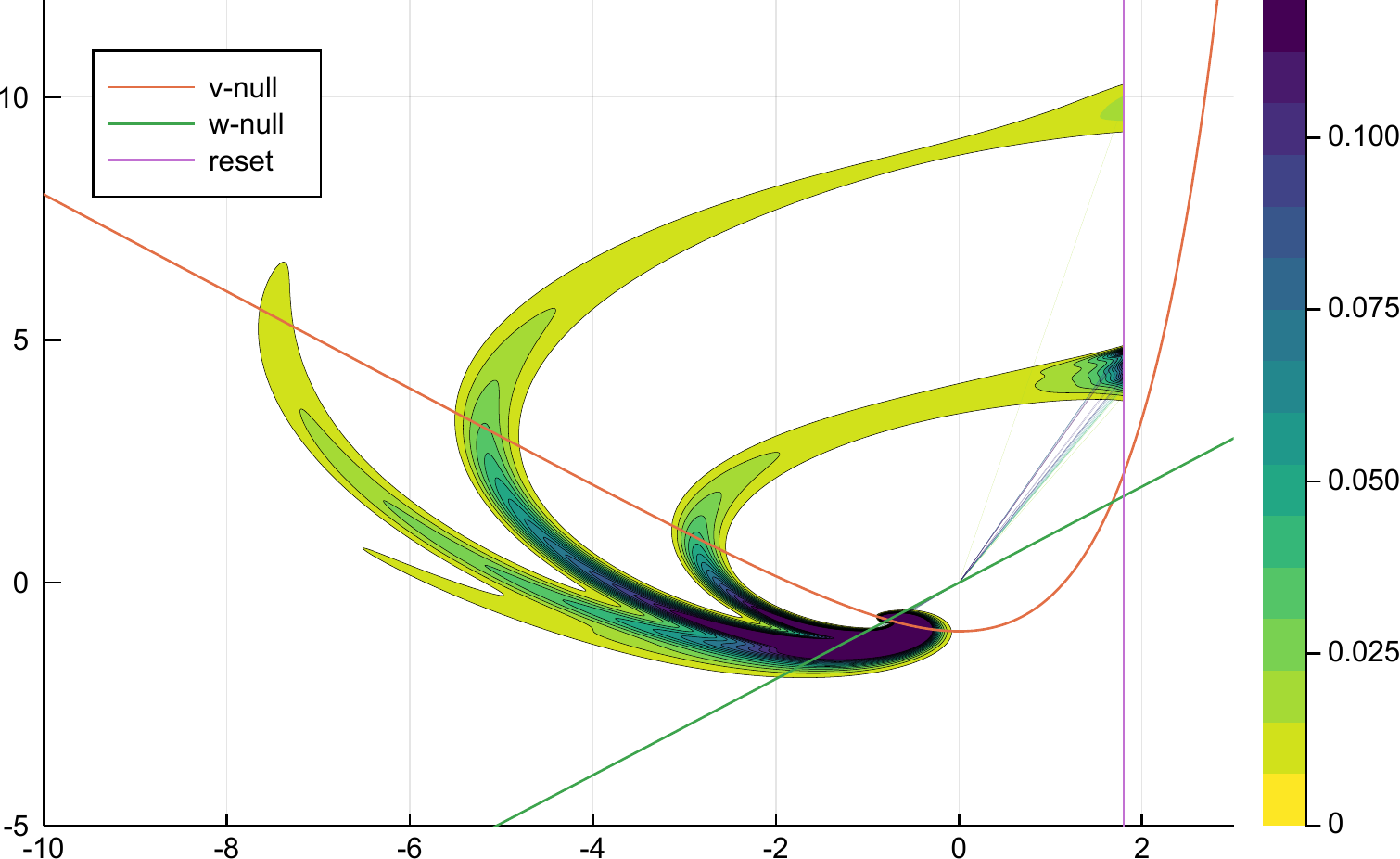}
	\includegraphics[width=0.47\textwidth]{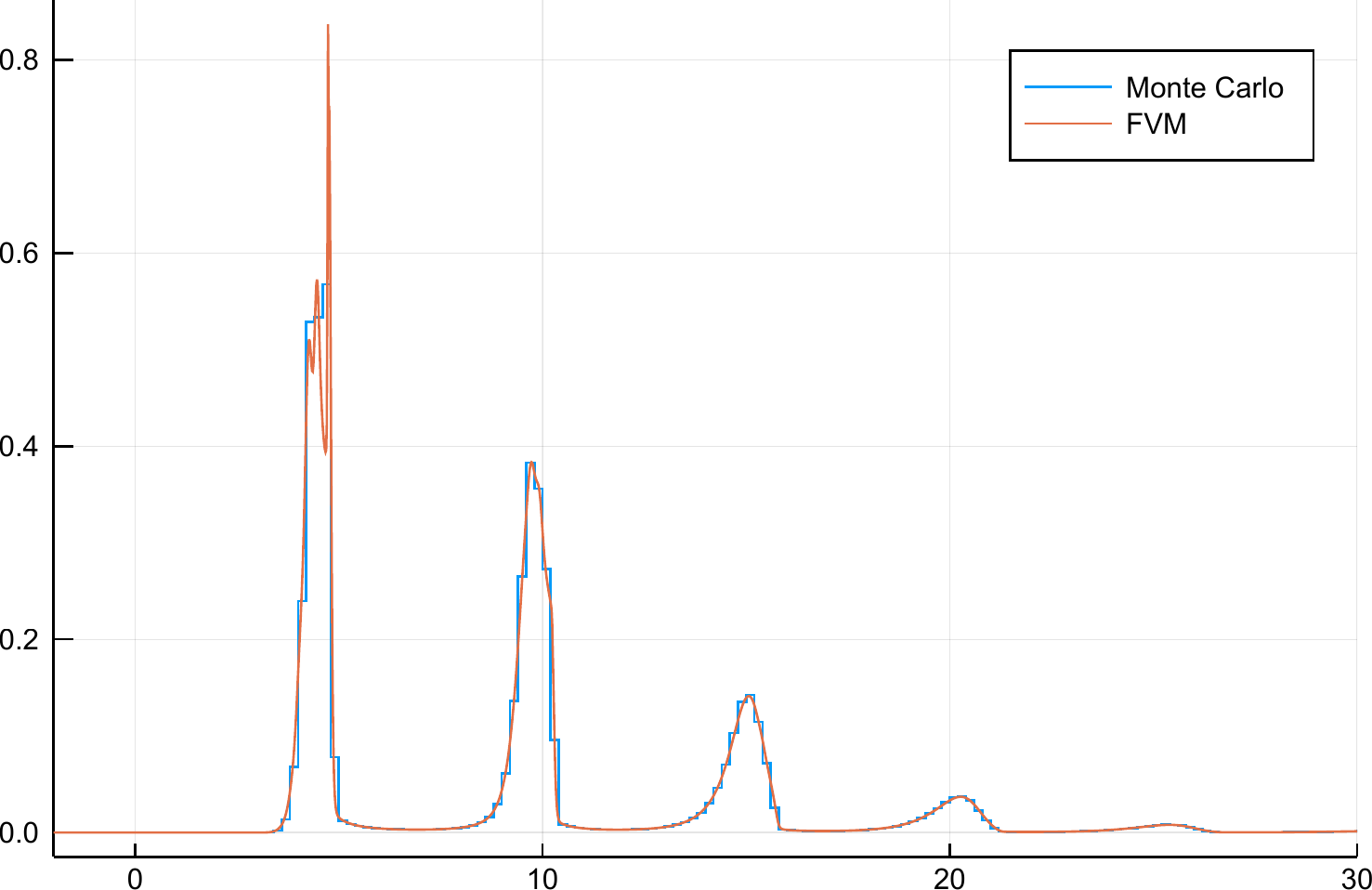}
		\includegraphics[width=0.47\textwidth]{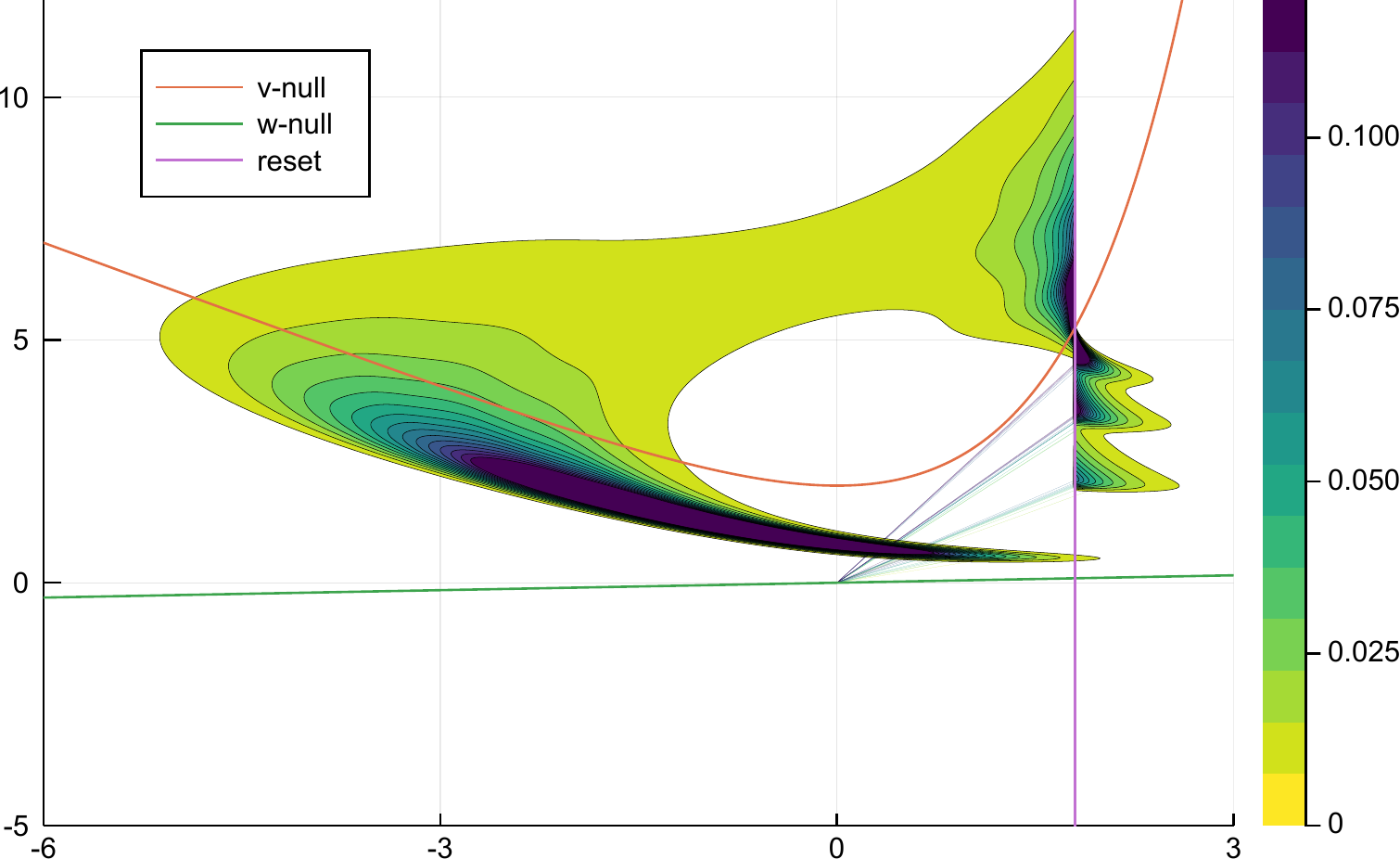}
		\includegraphics[width=0.47\textwidth]{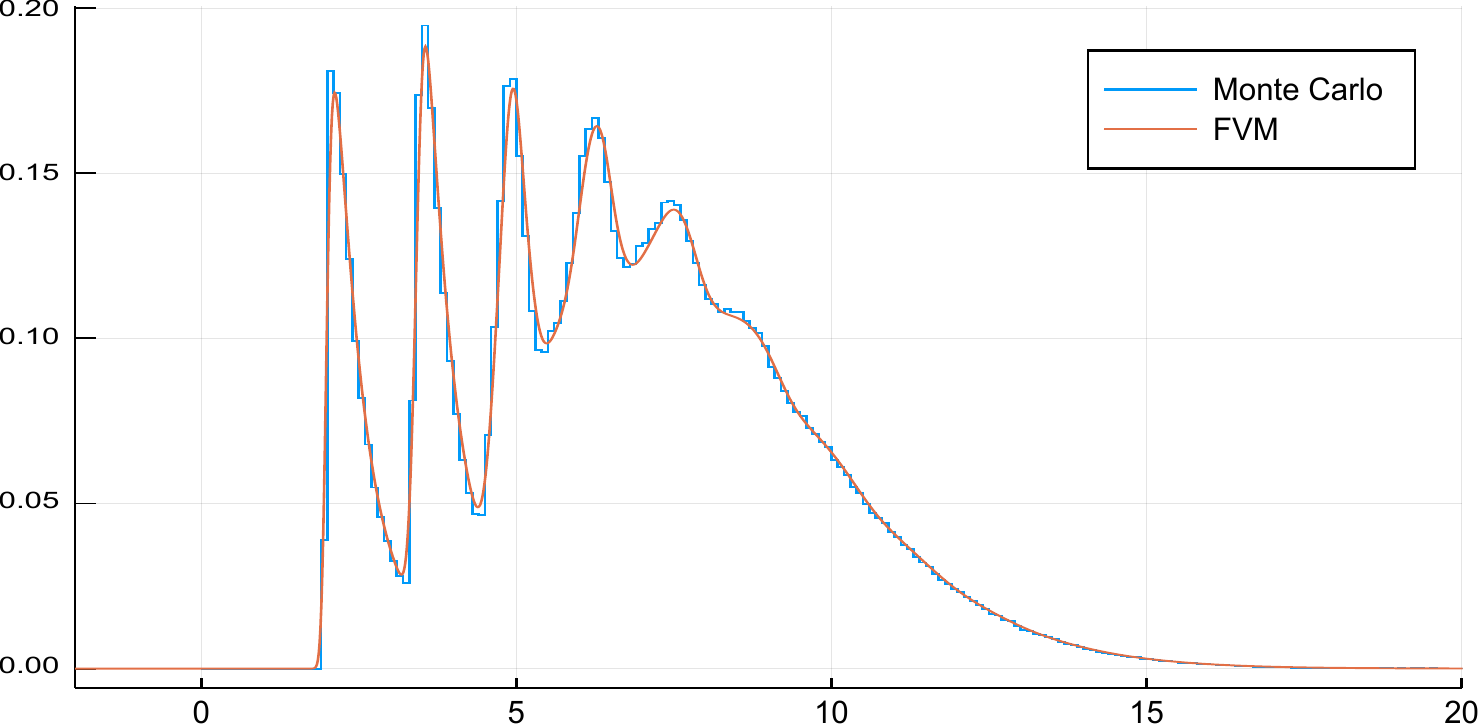}
	\caption{Top: First parameters in Table~\ref{modelParam}, column ``Invariant distributions''. Bottom: Idem but for the second parameters. Left: contour plot of the invariant distributions in the case of the isolated neuron ($J=0$). Colour axis are clipped on the contour plots in order to show structures that would be otherwise invisible. Right: comparison of the $w$-marginals with finite size system. We used $N_v=N_w=3000$ for the FVM. For the Monte Carlo, we simulated $2\cdot 10^6$ jumps.}
	\label{fig:inv-dist}
\end{figure}

The existence of invariant distributions for the nonlinear equation \eqref{macro} is much less trivial to study numerically. Based on the above conjecture, it should hold that there is a unique attracting invariant distribution when the connectivity $J$ is small. 

We therefore seek to study the effect of connectivity strength by varying the coupling parameter $J$.
Following the approach done in \cite{Drogoul2017}, we look for synchronised activity within the network.
We use same initial condition and parameters as in the previous paragraph, except for the parameter $J$.

Results are displayed on \cref{fig:Hopf}.
For values of $J$ smaller than $6.15$ (inset of~\cref{fig:Hopf}~Top~Left), the network is not synchronised, and tends to relax to an invariant distribution.
Above a threshold value $J^* \approx 6.15$, we observe synchronised activity within the network characterised by a periodic solution to \eqref{macro}; see \cref{cycle} for an example of a periodic orbit far from the ``Hopf bifurcation point''.
Figure~\ref{fig:Hopf} is a numerical evidence of the possible existence of a Hopf bifurcation for the network, based on the parameter $J$. Please note that we could not get the scaling behaviour \cite{kuznetsov2004} in $\propto\sqrt{J-J^*}$ for the amplitude of the periodic orbit (inset of~\cref{fig:Hopf}~Top~Left) possibly due to a subcritical bifurcation.

\begin{figure}[ht!]
  \begin{center}
    \includegraphics[width=0.4\textwidth]{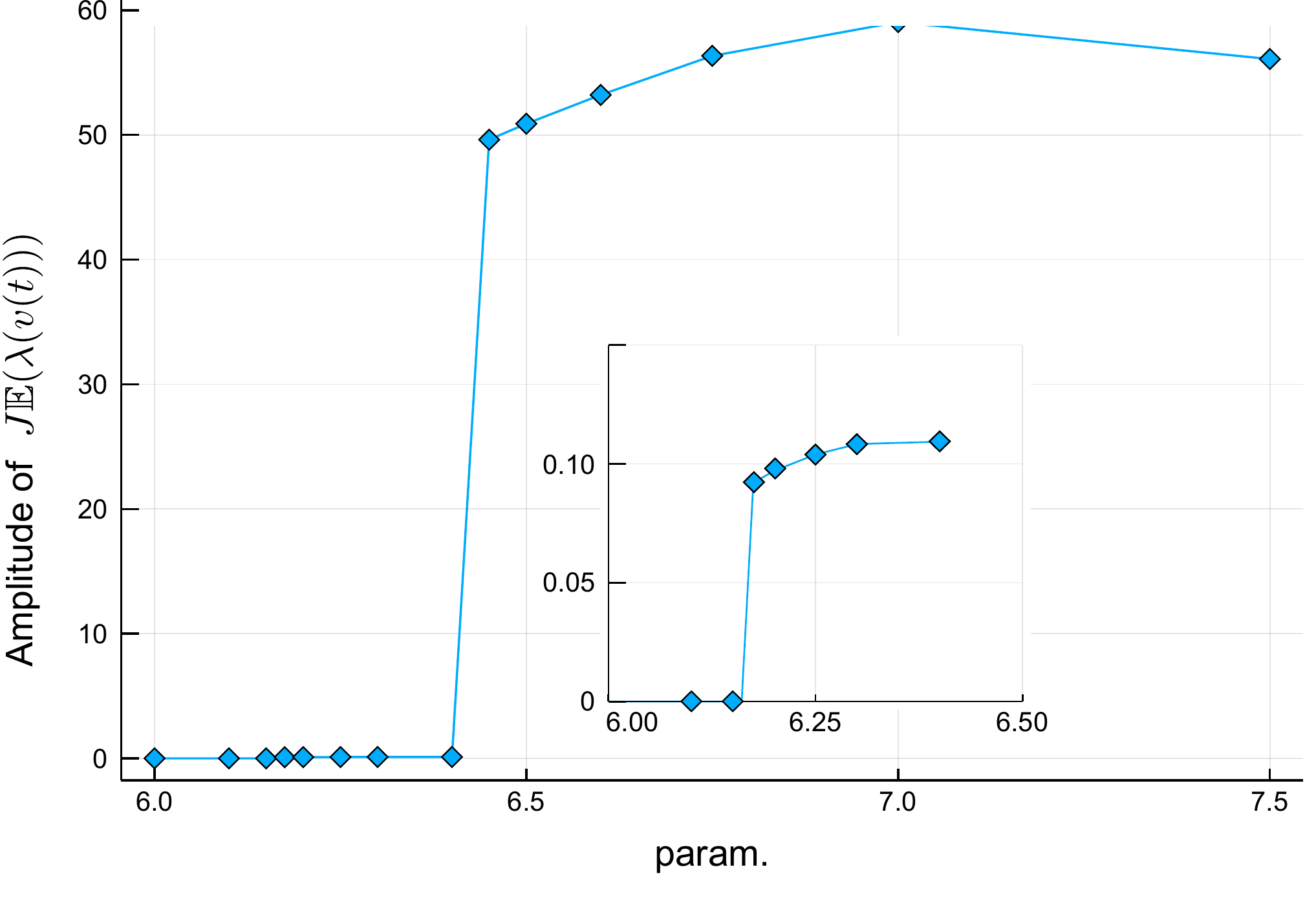}
    \includegraphics[width=0.45\textwidth]{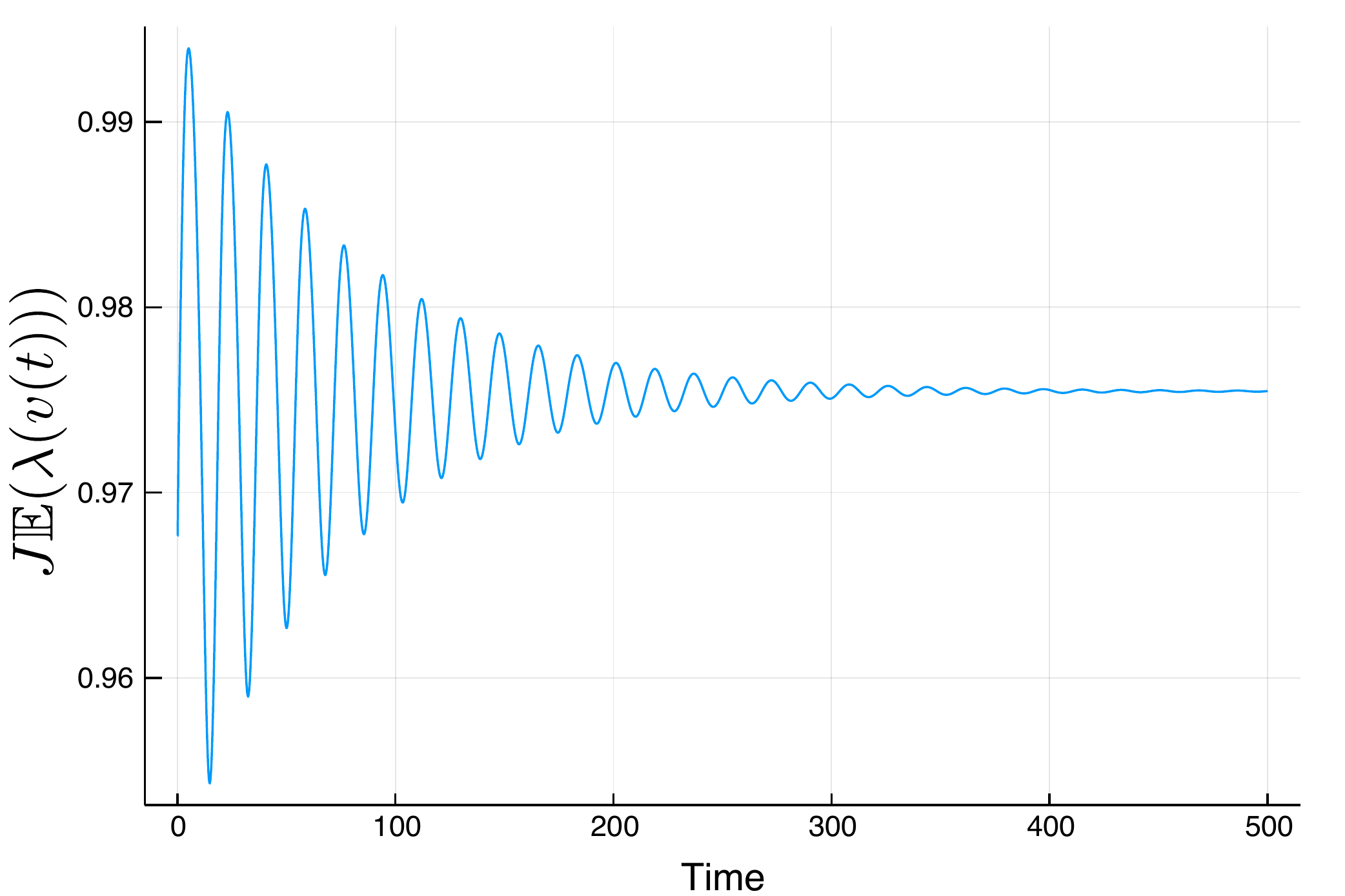}
    \includegraphics[width=0.45\textwidth]{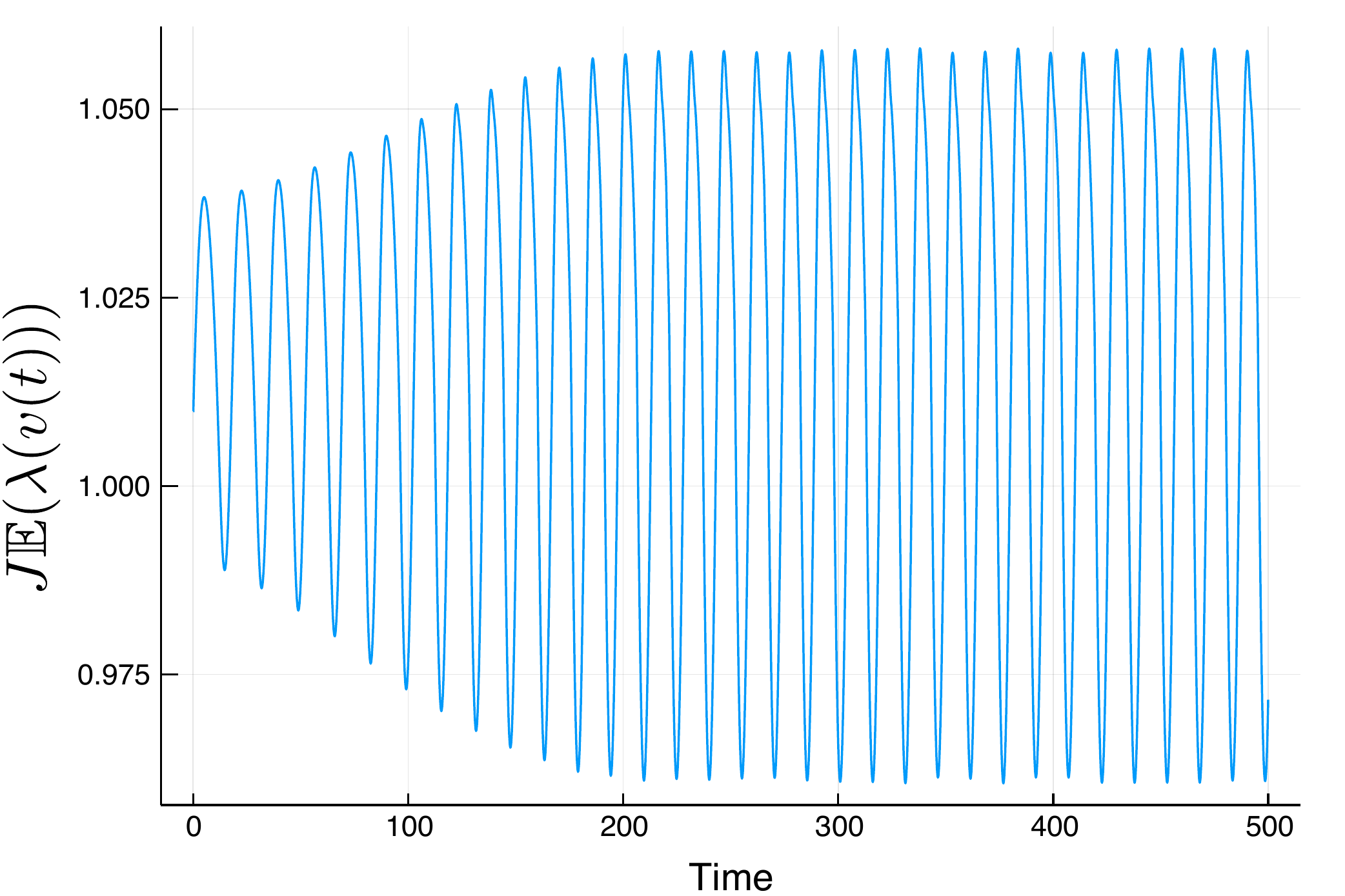}
     \includegraphics[width=0.45\textwidth]{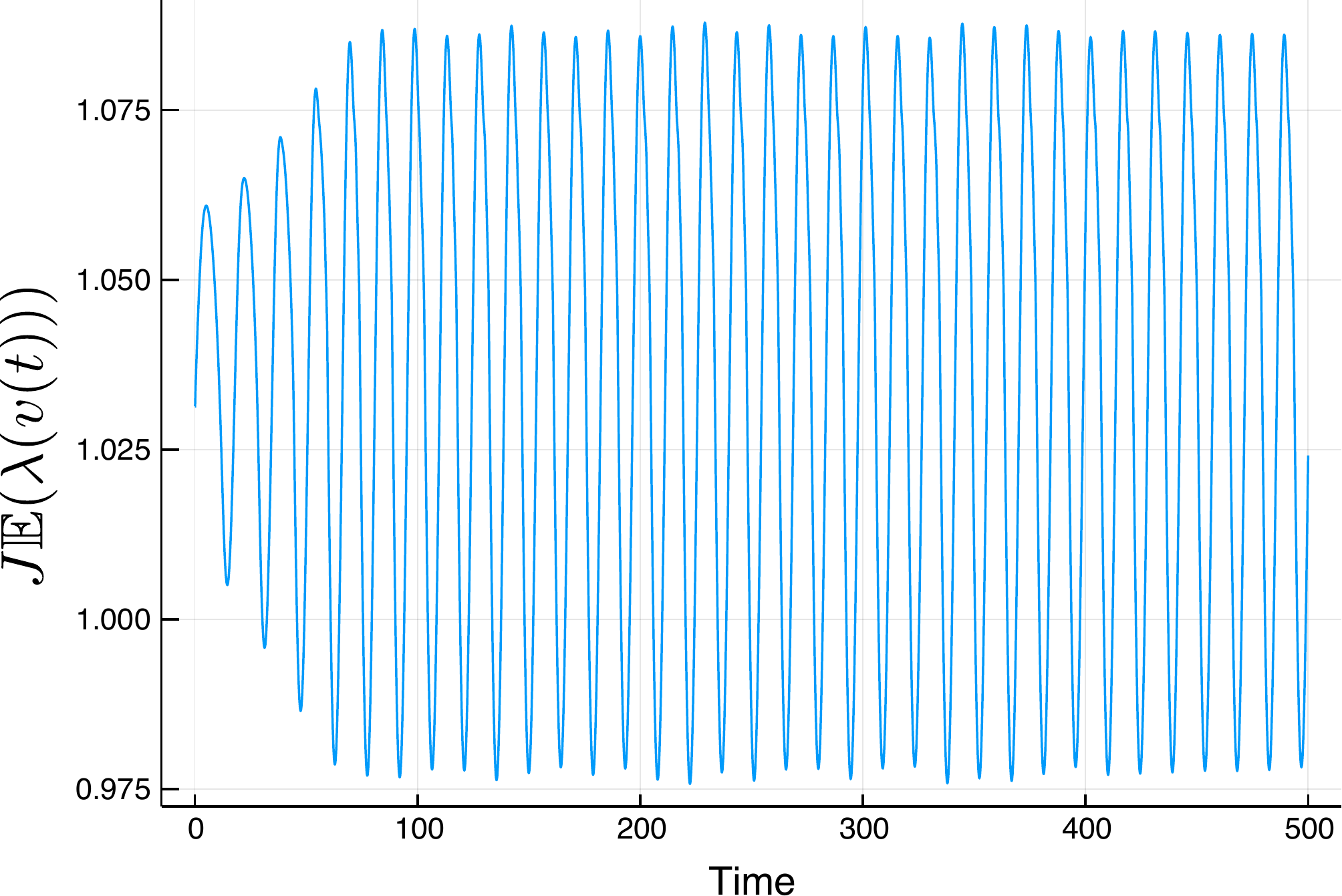}
    \includegraphics[width=0.45\textwidth]{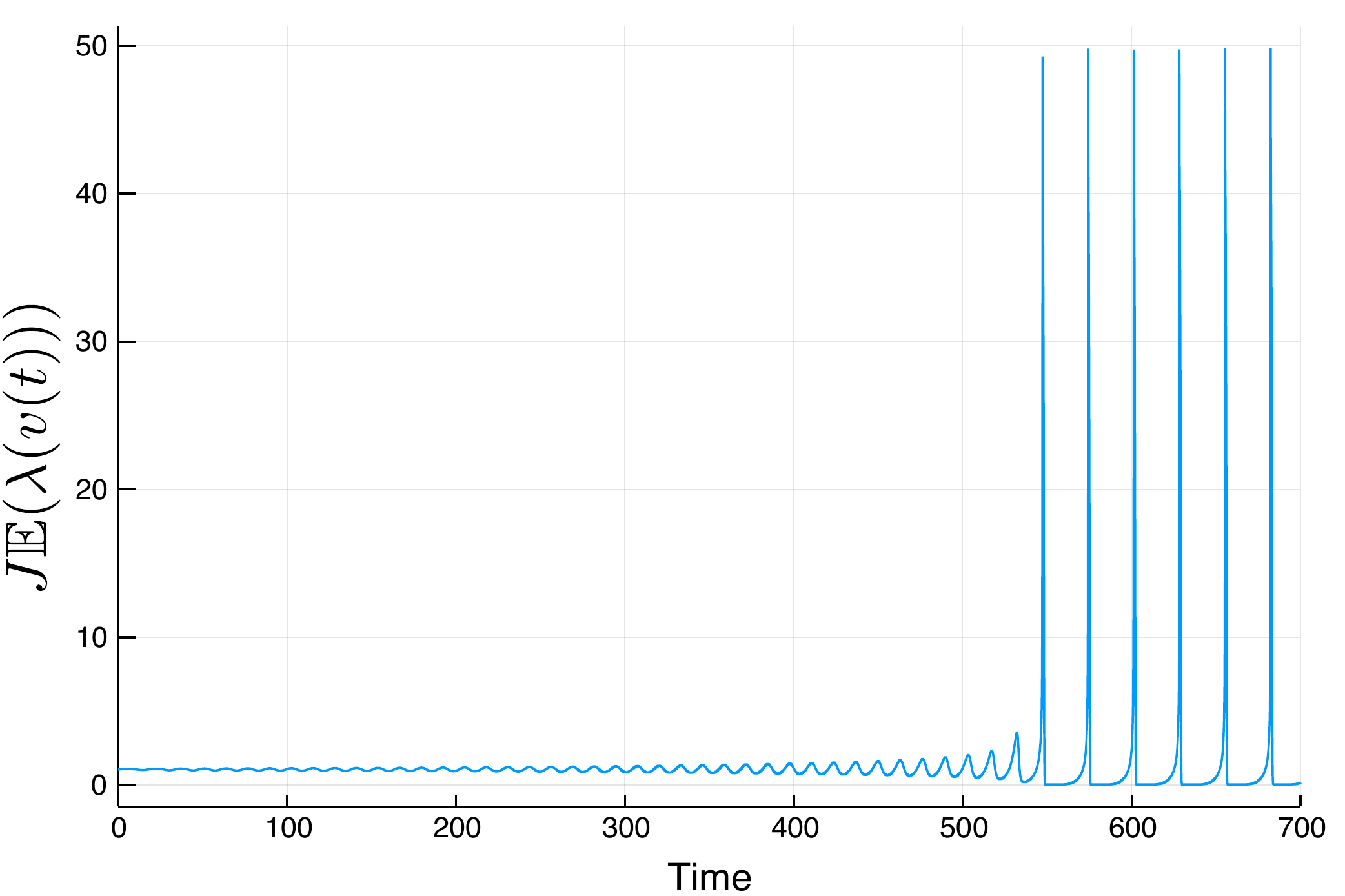}
     \includegraphics[width=0.45\textwidth]{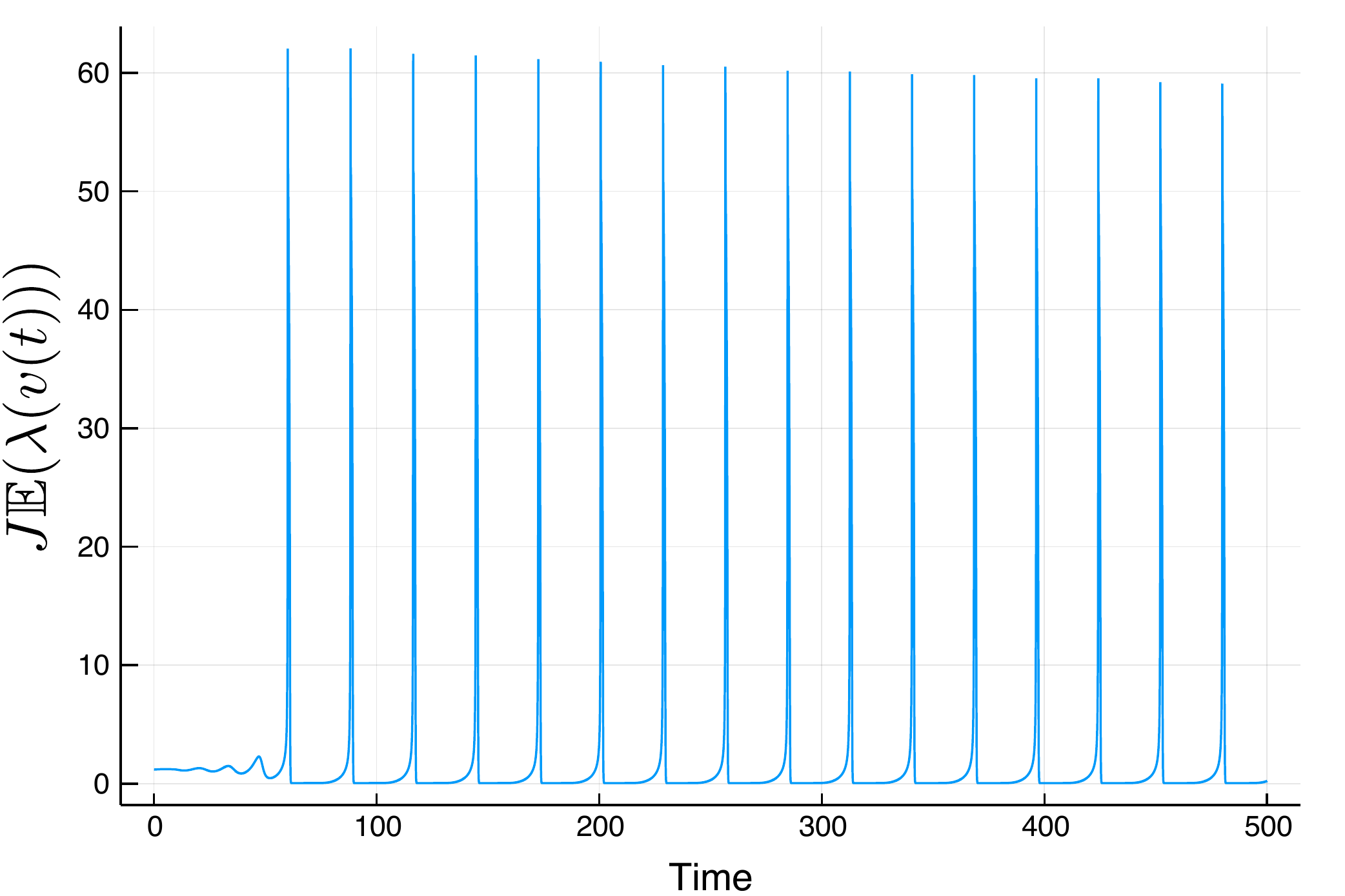}
  \end{center}
  \caption{Effect of synaptic coupling $J$.
    Simulations carried out with mean-field approximation \cref{macro}.
    Top: amplitude of signal with respect to coupling parameter J.
    Bottom: Signal for $J=6, 6.2,6.3,6.45,7$.
    A Hopf bifurcation appears around the critical value $J^* \approx 6.15$.
  }
  \label{fig:Hopf}
\end{figure}

\begin{figure}
  \begin{center}
    \includegraphics[width=0.99\textwidth]{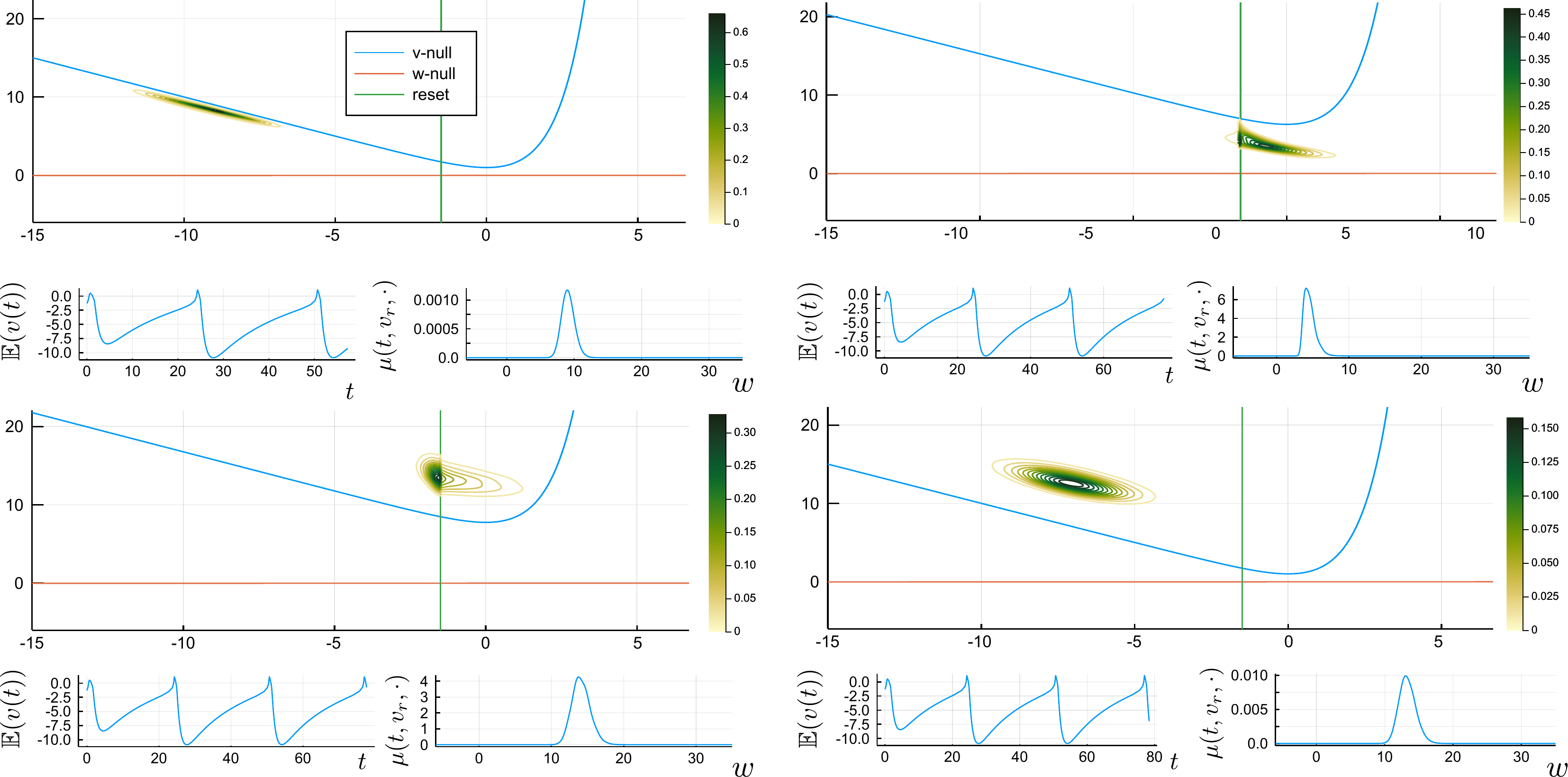}\\
  \end{center}
  \caption{Limit cycle. When the coupling strength is above a critical value, the network synchronised its activity, producing oscillations at macroscopic level (top figure).
    At microscopic level, we see that the density changes the network activity, forcing the $v$-nullcline to rise (middle figures).
    This in return drives the density up in the domain, until crossing the $v$-nullcline and comes back to the bottom of the domain (bottom figures).
    Parameters identical to the ones of the previous figure except for  $J=7$.
  }
  \label{cycle}
\end{figure}

\section{Conclusion}
\label{sec.conclusion}

In this work, we have presented a new nonlinear stochastic model of a network of stochastic spiking neurons.
This model naturally avoids blow-up solutions that may appear using ``threshold crossing'' based spiking like for the Integrate and Fire neuron model.
The system admits a mean-field limit: we have derived the PDE heuristically, and numerical simulations have confirmed the heuristic.
We have shown that this mean-field may be seen as a coupled transport equation, and could be entirely defined by deduction from the solution on the reset interface.

We have designed a Monte Carlo method to simulate the system of neurons.
On the other hand, we have designed a reliable finite volume method in order to simulate the PDE.
This numerical method is overall conservative, theoretically second order accurate in time and preserves positivity.
As further work, it would be interesting to use mesh adaptation.

Taking advantage of those numerical tools, we have studied numerically the network.
As the network size tends to infinity, numerical simulations tend to show propagation of chaos, and convergent behaviour from microscopic description \cref{micro} to macroscopic one \cref{macro}.
Two open theoretical questions remain on this point: does the particle system propagate chaos, and is it possible to rigorously prove the convergence from the particle system to the mean-field model?

By varying the strength of the connectivity, we have observed a Hopf bifurcation, signature of a synchronisation of the activity within the system.
This work may be pursued in several ways: prove	the existence and uniqueness of invariant distributions, and the theoretical and numerical study of bifurcations.

\section*{Acknowledgements}

This research has received funding from the European Un\-ion’s Horizon 2020 Framework Programme for Research and Innovation under the Specific Grant Agreement No. 785907 (Human Brain Project SGA2).

We would like to thank the Julia community for their help online. We would like to thank Laurent Monasse for fruitful discussions.

\bibliographystyle{siamplain}

\bibliography{./articleMMS.bib}

\begin{thebibliography}{10}

\bibitem{Augustin2017}
{\sc M.~Augustin, J.~Ladenbauer, F.~Baumann, and K.~Obermayer}, {\em
  Low-dimensional spike rate models derived from networks of adaptive
  integrate-and-fire neurons: {Comparison} and implementation}, PLOS
  Computational Biology, 13 (2017).

\bibitem{Berger1929}
{\sc H.~Berger}, {\em {{\"U}ber das Elektrenkephalogramm des Menschen}}, Archiv
  fur Psychiatrie und Nervenkrankheiten, 87 (1929), pp.~527--70.

\bibitem{boyer_analysis_2012}
{\sc F.~Boyer}, {\em Analysis of the upwind finite volume method for general
  initial- and boundary-value transport problems}, IMA Journal of Numerical
  Analysis, 32 (2012), pp.~1404--1439.

\bibitem{BretteGerstner2005}
{\sc R.~Brette and W.~Gerstner}, {\em Adaptive exponential integrate-and-fire
  model as an effective description of neuronal activity}, Journal of
  Neurophysiology, 94 (2005), pp.~3637--3642.

\bibitem{Brunel2000}
{\sc N.~Brunel}, {\em Dynamics of sparsely connected networks of excitatory and
  inhibitory spiking neurons}, Journal of Computational Neuroscience, 8 (2000),
  pp.~183--208.

\bibitem{Carlen2011}
{\sc E.~Carlen, R.~Chatelin, P.~Degond, and B.~Wennberg}, {\em Kinetic
  hierarchy and propagation of chaos in biological swarm models}, Physica D:
  Nonlinear Phenomena, 260 (2013), pp.~90--111.

\bibitem{carrillo_classical_2013}
{\sc J.~A. Carrillo, M.~d.~M. González, M.~P. Gualdani, and M.~E. Schonbek},
  {\em Classical solutions for a nonlinear {Fokker-Planck} equation arising in
  computational neuroscience}, Communications in Partial Differential
  Equations, 38 (2013), pp.~385--409.

\bibitem{chevallier_microscopic_2015}
{\sc J.~Chevallier, M.~J. Cáceres, M.~Doumic, and P.~Reynaud-Bouret}, {\em
  Microscopic approach of a time elapsed neural model}, Mathematical Models and
  Methods in Applied Sciences, 25 (2015), pp.~2669--2719.

\bibitem{Davis1993}
{\sc M.~H.~A. Davis}, {\em Markov models and optimization}, Monographs on
  statistics and applied probability, Chapman \& Hall, London ; New York, 1st
  ed~ed., 1993.

\bibitem{Delarue2015}
{\sc F.~Delarue, J.~Inglis, S.~Rubenthaler, and E.~Tanr{\'e}}, {\em Global
  solvability of a networked integrate-and-fire model of {McKean--Vlasov}
  type}, The Annals of Applied Probability, 25 (2015), pp.~2096--2133.

\bibitem{diVolo2019}
{\sc M.~di~Volo, A.~Romagnoni, C.~Capone, and A.~Destexhe}, {\em Biologically
  {Realistic} {Mean}-{Field} {Models} of {Conductance}-{Based} {Networks} of
  {Spiking} {Neurons} with {Adaptation}}, Neural Computation, 31 (2019),
  pp.~653--680.

\bibitem{FiringVariability2011}
{\sc S.~Ditlevsen and P.~Lansky}, {\em Firing variability is higher than
  deduced from the empirical coefficient of variation}, Neural Computation, 23
  (2011), pp.~1944--1966.

\bibitem{Drogoul2017}
{\sc A.~Drogoul and R.~Veltz}, {\em Hopf bifurcation in a nonlocal nonlinear
  transport equation stemming from stochastic neural dynamics}, Chaos: An
  Interdisciplinary Journal of Nonlinear Science, 27 (2017), p.~021101.

\bibitem{duarte_hydrodynamic_2015}
{\sc A.~Duarte, G.~Ost, and A.~A. Rodríguez}, {\em Hydrodynamic {Limit} for
  {Spatially} {Structured} {Interacting} {Neurons}}, Journal of Statistical
  Physics, 161 (2015), pp.~1163--1202.

\bibitem{farago2009a}
{\sc I.~Farag{\'o}, {\'A}.~Havasi, and Z.~Zlatev}, {\em Richardson-extrapolated
  sequential splitting and its application}, Journal of Computational and
  Applied Mathematics, 226 (2009), pp.~218--227.

\bibitem{fardet_understanding_2018}
{\sc T.~Fardet, M.~Ballandras, S.~Bottani, S.~Métens, and P.~Monceau}, {\em
  Understanding the generation of network bursts by adaptive oscillatory
  neurons}, Frontiers in Neuroscience, 12 (2018).

\bibitem{Fournier2016}
{\sc N.~Fournier and E.~Löcherbach}, {\em On a toy model of interacting
  neurons}, Annales de l'Institut Henri Poincaré, Probabilités et
  Statistiques, 52 (2016), pp.~1844--1876.

\bibitem{fournier_toy_2018}
{\sc N.~{Fournier}, E.~{Tanr{\'e}}, and R.~{Veltz}}, {\em On a toy network of
  neurons interacting through their dendrites}, arXiv e-prints,  (2018).

\bibitem{Gerstner2014}
{\sc W.~Gerstner, W.~Kistler, R.~Naud, and L.~Paninski}, {\em Neuronal
  Dynamics: From Single Neurons to Networks and Models of Cognition}, Cambridge
  University Press, 2014.

\bibitem{gerstner_neuronal_2014}
{\sc W.~Gerstner, W.~M. Kistler, R.~Naud, and L.~Paninski}, {\em Neuronal
  dynamics: from single neurons to networks and models of cognition}, Cambridge
  University Press, Cambridge, United Kingdom, 2014.

\bibitem{glowinski2016a}
{\sc R.~Glowinski, S.~J. Osher, and W.~{Yin (eds.)}}, {\em Splitting Methods in
  Communication, Imaging, Science, and Engineering}, Springer International
  Publishing, 2016.

\bibitem{HodgkinHuxley1952}
{\sc A.~L. Hodgkin and A.~F. Huxley}, {\em A quantitative description of
  membrane current and its application to conduction and excitation in nerve},
  The Journal of Physiology, 117 (1952), pp.~500--544.

\bibitem{inglis_mean-field_2015}
{\sc J.~Inglis and D.~Talay}, {\em Mean-field limit of a stochastic particle
  system smoothly interacting through threshold hitting-times and applications
  to neural networks with dendritic component}, SIAM Journal on Mathematical
  Analysis, 47 (2015).

\bibitem{Izhikevich2004}
{\sc E.~M. Izhikevich}, {\em Which model to use for cortical spiking neurons?},
  IEEE Transactions on Neural Networks, 15 (2004), pp.~1063--1070.

\bibitem{Kac1956}
{\sc M.~Kac}, {\em Foundations of kinetic theory}, in Proceedings of the Third
  Berkeley Symposium on Mathematical Statistics and Probability, Volume 3:
  Contributions to Astronomy and Physics, Berkeley, Calif., 1956, University of
  California Press, pp.~171--197.

\bibitem{kamps_computational_2019}
{\sc M.~d. Kamps, M.~Lepperød, and Y.~M. Lai}, {\em Computational geometry for
  modeling neural populations: {From} visualization to simulation}, PLOS
  Computational Biology, 15 (2019), p.~e1006729.

\bibitem{kuznetsov2004}
{\sc Y.~A. Kuznetsov}, {\em Elements of {Applied} {Bifurcation} {Theory}},
  Springer New York, New York, NY, 2004.

\bibitem{Lapicque1907}
{\sc L.~Lapicque}, {\em Recherches quantitatives sur l'excitation
  {\'e}lectrique des nerfs trait{\'e}e comme polarisation}, J. Physiol. Pathol.
  Gen., 9 (1907), pp.~620--635.

\bibitem{lemaire2018a}
{\sc V.~Lemaire, M.~Thieullen, and N.~Thomas}, {\em Exact simulation of the
  jump times of a class of piecewise deterministic markov processes}, Journal
  of Scientific Computing, 75 (2018), pp.~1776--1807.

\bibitem{marpeau_finite_2009}
{\sc F.~Marpeau, A.~Barua, and K.~Josić}, {\em A finite volume method for
  stochastic integrate-and-fire models}, Journal of Computational Neuroscience,
  26 (2009), pp.~445--457.

\bibitem{DeMasi2015}
{\sc A.~D. Masi, A.~Galves, E.~L{\"o}cherbach, and E.~Presutti}, {\em
  Hydrodynamic limit for interacting neurons}, Journal of Statistical Physics,
  158 (2015), pp.~866--902.

\bibitem{nicola_bifurcations_2013}
{\sc W.~Nicola and S.~A. Campbell}, {\em Bifurcations of large networks of
  two-dimensional integrate and fire neurons}, Journal of Computational
  Neuroscience, 35 (2013), pp.~87--108.

\bibitem{nicola_nonsmooth_2016}
{\sc W.~Nicola and S.~A. Campbell}, {\em Nonsmooth bifurcations of mean field
  systems of two-dimensional integrate and fire neurons}, SIAM Journal on
  Applied Dynamical Systems, 15 (2016), pp.~391--439.

\bibitem{fMRI1990}
{\sc S.~Ogawa, T.-S. Lee, A.~Nayak, and P.~Glynn}, {\em Oxygenation-sensitive
  contrast in magnetic resonance image of rodent brain at high magnetic
  fields}, Magnetic Resonance in Medicine, 14 (1990), pp.~68--78.

\bibitem{ostojic_synchronization_2009}
{\sc S.~Ostojic, N.~Brunel, and V.~Hakim}, {\em Synchronization properties of
  networks of electrically coupled neurons in the presence of noise and
  heterogeneities}, Journal of Computational Neuroscience, 26 (2009),
  pp.~369--392.

\bibitem{Ott2008}
{\sc E.~Ott and T.~M. Antonsen}, {\em Low dimensional behavior of large systems
  of globally coupled oscillators}, Chaos: An Interdisciplinary Journal of
  Nonlinear Science, 18 (2008), p.~037113.

\bibitem{Pazo2014}
{\sc D.~Pazó and E.~Montbrió}, {\em Low-{Dimensional} {Dynamics} of
  {Populations} of {Pulse}-{Coupled} {Oscillators}}, Physical Review X, 4
  (2014).

\bibitem{Pazo2016}
{\sc D.~Pazó and E.~Montbrió}, {\em From {Quasiperiodic} {Partial}
  {Synchronization} to {Collective} {Chaos} in {Populations} of {Inhibitory}
  {Neurons} with {Delay}}, Physical Review Letters, 116 (2016).

\bibitem{renart_mean-field_2004}
{\sc A.~Renart, N.~Brunel, and X.-J. Wang}, {\em Mean-field theory of
  irregularly spiking neuronal populations and working memory in recurrent
  cortical networks}, Boca Raton, CRC Press, 2004.

\bibitem{Schwalger2017}
{\sc T.~Schwalger, M.~Deger, and W.~Gerstner}, {\em Towards a theory of
  cortical columns: {From} spiking neurons to interacting neural populations of
  finite size}, PLOS Computational Biology, 13 (2017).

\bibitem{Strang1968}
{\sc G.~Strang}, {\em On the construction and comparison of difference
  schemes}, SIAM Journal on Numerical Analysis, 5 (1968), pp.~506--517.

\bibitem{sznitman1991a}
{\sc A.-S. Sznitman}, {\em Topics in propagation of chaos}, in \'Ecole
  d'\'Et{\'e} de Probabilit{\'e}s de Saint-Flour, 1989, Springer, 1991,
  pp.~164--251.

\bibitem{Touboul2008}
{\sc J.~Touboul}, {\em Bifurcation analysis of a general class of nonlinear
  integrate-and-fire neurons}, SIAM Journal on Applied Mathematics, 68 (2008),
  pp.~1045--1079.

\bibitem{touboul_dynamics_2008}
{\sc J.~Touboul and R.~Brette}, {\em Dynamics and bifurcations of the adaptive
  exponential integrate-and-fire model}, Biological Cybernetics, 99 (2008),
  pp.~319--334.

\bibitem{Veltz2015}
{\sc R.~Veltz}, {\em A new twist for the simulation of hybrid systems using the
  true jump method}.
\newblock arXiv:1504.06873, 2015.

\bibitem{veltz_new_2015}
{\sc R.~Veltz}, {\em A new twist for the simulation of hybrid systems using the
  true jump method}, arXiv:1504.06873 [math],  (2015).
\newblock arXiv: 1504.06873.

\bibitem{Vinokur1989}
{\sc M.~Vinokur}, {\em An analysis of finite-difference and finite-volume
  formulations of conservation laws}, Journal of Computational Physics, 81
  (1989), pp.~1--52.

\bibitem{zlatev2017a}
{\sc Z.~Zlatev, I.~Dimov, I.~Farag{\'o}, and {\'A}.~Havasi}, {\em Richardson
  Extrapolation: Practical Aspects and Applications}, De Gruyter, 2017.

\end{thebibliography}

\end{document}